\documentclass[11pt]{amsart}
\usepackage{amssymb,amstext,amsmath,amscd,amsthm,amsfonts,enumerate,graphicx,latexsym}
\usepackage{extarrows}
\usepackage{stackrel}
\usepackage[all]{xy}
\usepackage[usenames]{color}

\newtheorem{theorem}{Theorem}[section]
\newtheorem{lemma}[theorem]{Lemma}
\newtheorem{corollary}[theorem]{Corollary}
\newtheorem{proposition}[theorem]{Proposition}
\theoremstyle{definition}
\newtheorem{definition}[theorem]{Definition}

\newtheorem{remark}[theorem]{Remark}
\numberwithin{equation}{section}
 
\title[Krull--Gabriel dimension and Cohen-Macaulay representations]{Krull--Gabriel dimension and Cohen-Macaulay representations}
\subjclass[2020]{Primary 13C14;  Secondary 16G60.}
\date{\today}
\thanks{NH was partly supported by JSPS KAKENHI Grant Number 25K06966. 
YY was partly supported by JSPS KAKENHI Grant Number 24K0669.}
\author{Naoya Hiramatsu} 
\address{Institute for the Advancement of Higher Education, Okayama University of Science, 1-1 Ridai-cho, Kita-ku, Okayama 700-0005, Japan}
\email{n-hiramatsu@ous.ac.jp} 
\author{Yuji Yoshino} 
\address{Graduate School of Environmental, Life, Natural Science and Technology, Okayama University, Okayama 700-8530, Japan}
\email{yoshino@math.okayama-u.ac.jp} 

\begin{document}
\def\AA{\mathcal A}
\def\add{\mathrm{add}} 
\def\CC{\mathcal C}
\def\CML{{\mathcal C}(\Lambda )} 
\def\CMR{{\mathcal C}(R)} 
\def\Coker{\mathrm{Coker}\,}
\def\colim{\operatorname*{colim}}
\def\End{\mathrm{End}}
\def\Ext{\mathrm{Ext}}
\def\Hom{\mathrm{Hom}}
\def\HomA{\mathrm{Hom}_{\AA}}
\def\HomqA{\mathrm{Hom}_{\AA / \AA_0}}
\def\HomAS{\mathrm{Hom}_{\AA / \SS}}
\def\HomR{\mathrm{Hom}_R}
\def\Im{\mathrm{Im}\,}
\def\Ind{\mathrm{Ind}}
\def\Ker{\mathrm{Ker}\,}
\def\KGdim{\mathrm{KGdim}\,}
\def\m{\mathfrak m}
\def\Mod{\mathrm{Mod}\,}
\def\mod{\mathrm{mod}\,}
\def\ModCC{{\mathrm{Mod}}\,\CC}
\def\modCC{{\mathrm{mod}}\,\CC}
\def\ModsCC{{\mathrm{Mod}}\,\sCC}
\def\modsCC{{\mathrm{mod}}\,\sCC}
\def\ModCML{{\mathrm{Mod}}\,\CML}
\def\modCML{{\mathrm{mod}}\,\CML}
\def\ModsCML{{\mathrm{Mod}}\,\sCML}
\def\modsCML{{\mathrm{mod}}\,\sCML}
\def\modR{{\mathrm{mod}}\,R}
\def\p{\mathfrak p}
\def\q{\mathfrak q}
\def\rad{\mathrm{rad}\,}
\def\SS{\mathcal S} 
\def\Spec{\mathrm{Spec}\,} 
\def\sCC{\underline{\mathcal C}}
\def\sCML{\underline{{\mathcal C}}(\Lambda )}  
\def\sCMR{\underline{{\mathcal C}}(R)}  
\def\sEnd{\underline{\mathrm{End}}}
\def\sHom{\underline{\mathrm{Hom}}}
\def\sHomR{\underline{\mathrm{Hom}}_R}
\def\smod{\underline{\mathrm{mod}}}
\def\soc{{\mathrm{soc}}\,}
\def\Supp{{\mathrm{Supp}}\,}
\def\syz{\mathrm{syz}\,}
\def\top{{\mathrm{top}}\,}
\def\Z{\mathbb Z}

\def\gldim{\mathrm{gl.dim}}
\def\HomL{\mathrm{Hom}_\Lambda}
\def\sHomL{\underline{\mathrm{Hom}}_\Lambda}
\def\LL{\Lambda}
\def\modL{{\mathrm{mod}}\,\Lambda}
\def\PP{\mathcal P}
\def\II{\mathcal I}
\begin{abstract}
Let $R$ be a complete Cohen--Macaulay local ring and $\LL$ an $R$-order. 
We study the Krull--Gabriel dimension of the functor category $\modsCML$, where $\sCML$ is the stable category of maximal Cohen--Macaulay $\LL$-modules. 
This work is motivated by the non-existence theorem of Herzog and Krause for Krull--Gabriel dimension $1$ over artin algebras.  We first prove that, if $\LL$ is Gorenstein and $\KGdim \modsCML \leq 1$, then $\LL$ is an isolated singularity.
We then show that, if $\LL$ is an isolated singularity of uncountable Cohen--Macaulay representation type, then $\KGdim \modsCML \neq 1$.
\end{abstract}
\maketitle
\section{Introduction}\label{Intro}
The Krull--Gabriel dimension of an abelian category is an invariant that measures the complexity of the category in terms of a filtration by Serre subcategories. 
It was introduced by Gabriel \cite{Ga62} and it has played an important role in representation theory; see, for example,  Geigle \cite{G85} and Schr\"oer \cite{S00}. 

Let $A$ be a finite dimensional algebra over a field $k$ and let $\mod A$ denote the category of finitely generated $A$-modules. 
The category $\mod \mod A$ of finitely presented functors on $\mod A$ is abelian, and its Krull--Gabriel dimension, denoted by $\KGdim \mod \mod A$, is closely related to the representation type of $A$. 
Auslander's theorem \cite{A80} states that $A$ is of finite representation type if and only if $\KGdim \mod \mod A = 0$. 
Geigle \cite{G85} showed that tame hereditary algebras have Krull--Gabriel dimension $2$, while algebras of wild representation type have infinite Krull--Gabriel dimension \cite{Kr01}.

One of the striking results in this direction is the non-existence of Krull--Gabriel dimension $1$.
For artin algebras, it was obtained independently in the following form by Herzog and Krause.

\begin{theorem}[{\cite[Theorem 3.6]{He97}, \cite[Corollary 11.4]{Kr98}}]\label{Intro1}
Let $A$ be a finite dimensional algebra over an uncountable algebraically closed field. 
Then $\KGdim \mod \mod A \neq 1$. 
\end{theorem}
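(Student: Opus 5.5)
We argue by contradiction: assume $\KGdim \mod \mod A = 1$ and derive a contradiction from the structure of the Ziegler spectrum and the representation type of $A$. Since the dimension is not $0$, Auslander's theorem \cite{A80} shows that $A$ is of infinite representation type.

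\textbf{Step 1: generic modules.} Since $A$ is of infinite representation type, the second Brauer--Thrall conjecture (proved for algebras over algebraically closed fields) gives infinitely many dimensions $d$ for each of which there are infinitely many indecomposables of dimension $d$. By Crawley-Boevey's theory of generic modules, $A$ then has a generic module $G$: an indecomposable module of infinite length but of finite length over its endomorphism ring.

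\textbf{Step 2: finite Krull--Gabriel dimension forces an isolated point.} Finite Krull--Gabriel dimension implies that the Ziegler spectrum carries Cantor--Bendixson rank data. Each indecomposable pure-injective then lies in some layer $\mathcal S_n/\mathcal S_{n-1}$ of the Gabriel filtration of $\mod\mod A$. In that layer it corresponds to an isolated point whose localized functor is simple.

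We will place $G$ in layer $1$, since $G$ is not finitely presented and so is not among the isolated points of layer $0$. Then we analyse the quotient category $\mod\mod A/\mathcal S_0$, where $\mathcal S_0$ is the Serre subcategory of finite length functors. In this quotient every object has finite length, because the dimension is $1$.

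\textbf{Step 3: contradiction via a one-parameter family.} Over an uncountable algebraically closed field, the existence of $G$ yields, by Crawley-Boevey, a one-parameter family of finite-dimensional indecomposables $M_\lambda$, $\lambda \in k$, all degenerating to $G$. It also produces a map-induced family of pp-definable subgroups that is \emph{dense} in the lattice. Concretely, we want a chain of pp-formulas (equivalently, finitely presented functors) such that infinitely many incomparable intervals do not collapse modulo $\mathcal S_0$. This would contradict the finite length of the corresponding object in $\mod\mod A/\mathcal S_0$.

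Krause's route is an alternative: one shows that $\KGdim = 1$ forces the localization $\mod\mod A/\mathcal S_0$ to be equivalent to a length category associated with tubes and $G$. Then $G$ would be an isolated point of the closed set of non-finitely-presented points, and its endomorphism ring would be a division ring with a finite endolength dichotomy. Uncountability of $k$ gives uncountably many points $M_\lambda$ accumulating at $G$. Finite length of the functor isolating $G$ modulo $\mathcal S_0$ would allow only finitely many (or countably many) such neighbours, which is a contradiction.

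The main obstacle is this last step. One has to make precise why the generic point cannot be isolated in layer $1$, or equivalently produce an object of $\mod\mod A/\mathcal S_0$ of infinite length. I would first try Herzog's approach: use the uncountable field to count simple objects of the quotient. Each finitely presented functor has countably many subfunctors up to $\mathcal S_0$ when the quotient is of finite length, which clashes with the uncountably many pairwise non-isomorphic $M_\lambda$ detected by a single functor.
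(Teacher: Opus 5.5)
\textbf{There is a genuine gap, and it is exactly the step you flag as the main obstacle.} Steps~1--2 are legitimate citations: Auslander, Brauer--Thrall~II, existence of a generic module $G$, and the isolation condition placing $G$ at Cantor--Bendixson rank~$1$. But nothing in them is contradictory. The contradiction has to come from a statement like: a finitely presented functor whose image in $\mod\mod A/\mathcal S_0$ has finite length is nonzero on only countably many indecomposables. You assert this (``would allow only finitely many (or countably many) such neighbours'') but never prove it, and the mechanisms you suggest for it fail.

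Finite length in the quotient does not bound the number of subobjects or of incomparable intervals. If $\overline{S}$ is simple there, $\End(\overline{S})$ is a division $k$-algebra containing $k$. The graphs of $\lambda\cdot 1_{\overline{S}}$, $\lambda\in k$, are then uncountably many distinct subobjects of $\overline{S}\oplus\overline{S}$. So ``countably many subfunctors up to $\mathcal S_0$'' is false. Likewise, infinitely many incomparable non-collapsing intervals contradict nothing; only an infinite strictly monotone chain would. Finally, the claim that uncountably many $M_\lambda$ lie in every Ziegler neighbourhood of $G$ is asserted, not shown, and ``degenerating'' is not the relevant notion.

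The paper does not reprove this statement; it cites Herzog and Krause, whose generic-module proofs need a genuine analysis precisely at this point. What the paper does develop, for the stable analogue (Theorem~\ref{C1}, Corollary~\ref{C11}), is the missing lemma in a form that avoids generic modules entirely:
\begin{enumerate}[\rm(1)]
\item If $\overline{F}$ is simple modulo finite-length functors, the radical series of $F$ never stabilizes and $\Theta(F)=\bigcap_n\rad^n F\neq 0$.
\item Simplicity forces $\Theta(F)$ to be the union of its socle series (Theorem~\ref{C7}). A finitely generated subfunctor of $\Theta(F)$ not contained in that union would be neither of finite length nor of finite colength in $F$.
\item Hence $F$ is exhausted by countably many semisimple layers, each of finite support (Corollary~\ref{C8}).
\item Supports change only by finite sets under pseudo-isomorphism (Lemma~\ref{C9}).
\end{enumerate}
Run this along a composition series modulo $\mathcal S_0$ of a functor supported on every indecomposable; in $\mod\mod A$ one can take $\Hom_A(\ ,A/\rad A)$. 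It shows that $\KGdim\mod\mod A=1$ forces only countably many indecomposables. Over an uncountable algebraically closed field, Brauer--Thrall~II says infinite type gives uncountably many, a contradiction.

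The argument only needs two facts: finitely presented functors have finite socle and top, and simple functors have finite support. Both hold in $\mod\mod A$. With this lemma the generic module and the Ziegler spectrum become unnecessary; without it, or the Herzog--Krause analysis it replaces, your argument does not close.
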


The purpose of this paper is to prove an analogous non-existence theorem in Cohen--Macaulay representation theory. 
Let $R$ be a complete Cohen--Macaulay local ring and $\LL$ an $R$-order. 
Let $\CML$ denote the category of maximal Cohen--Macaulay $\LL$-modules. 
We denote by $\sCML$ the stable category of $\CML$, and consider the category $\modsCML$ of finitely presented functors on $\sCML$. 
The Krull--Gabriel dimension of $\modsCML$ measures the complexity of the stable category of maximal Cohen--Macaulay modules and is therefore naturally related to the Cohen--Macaulay representation type of $\LL$.

Our first main result shows that low Krull--Gabriel dimension imposes a strong singularity theoretic restriction.
Namely, the first two stages of the Krull--Gabriel filtration already force the order $\LL$ to be an isolated singularity.

\begin{theorem}[Theorem \ref{B10}]\label{Intro2}
Suppose that $\LL$ is Gorenstein and that $\KGdim \modsCML \le 1$. 
Then $\LL$ is an isolated singularity. 
\end{theorem}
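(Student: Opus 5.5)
We argue by contraposition: assuming $\LL$ is Gorenstein but not an isolated singularity, we show $\KGdim \modsCML \ge 2$. Choose a non-maximal prime $\p$ of $R$ at which $\LL_\p$ is not regular, i.e.\ $\CC(\LL_\p)$ has non-projective objects. The Krull--Gabriel filtration of $\modsCML$ starts with $\AA_0$, the Serre subcategory of finite-length functors, and $\AA_1$, the functors that become finite length in $\modsCML/\AA_0$. Our aim is to exhibit a representable functor $\sHomL(-,M)$ that does not lie in $\AA_1$.

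\textbf{Step 1 (finite-length functors are supported at $\m$).} For $M,N\in\CML$, $\sHomL(N,M)$ is a finitely generated $R$-module. If $\sHomL(-,M)$ has finite length in $\modsCML$, then it has finitely many composition factors, each a simple functor $S_X$ with $S_X(X)=\sEnd(X)/\rad$, a module of finite length over $R$. Hence $\sHomL(N,M)$ has finite length over $R$ for every $N$. In particular $\sEnd_\LL(M)_\p=\sEnd_{\LL_\p}(M_\p)=0$, so $M_\p$ is projective. More generally, localization at $\p$ defines an exact functor
\[
\modsCML\to\mod\underline{\CC}(\LL_\p),
\]
since $\sHomL(N,M)_\p\cong\underline{\Hom}_{\LL_\p}(N_\p,M_\p)$. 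This uses that $\LL$ is Gorenstein, so that projective $\LL$-modules localize to projectives and syzygies behave well. This functor kills $\AA_0$, because finite-length $R$-modules vanish at $\p$.

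\textbf{Step 2 (descending to the quotient).} The localization functor therefore induces an exact functor
\[
\Phi:\modsCML/\AA_0\to\mod\underline{\CC}(\LL_\p).
\]
An exact functor between abelian categories does not increase length. So if $F$ has finite length in the quotient, $\Phi(F)$ has finite length in $\mod\underline{\CC}(\LL_\p)$.

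\textbf{Step 3 (reduction to a lower-dimensional Gorenstein order).} For a non-projective $M\in\CML$ with $M_\p$ non-projective (for instance $M$ a high syzygy of $\LL/\p\LL$), it remains to show that $\underline{\Hom}_{\LL_\p}(-,M_\p)$ does not have finite length in $\mod\underline{\CC}(\LL_\p)$. Any finite-length functor there is killed by a power of the maximal ideal $\p R_\p$, since its composition factors are simples $S_X$, which are annihilated by $\p R_\p$. So it suffices to show that $\p^n\,\underline{\End}(M_\p)\neq 0$ for all $n$. Equivalently, $\underline{\End}_{\LL_\p}(M_\p)$ should not have finite length over $R_\p$, because $\dim R_\p>0$.

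\textbf{Main obstacle.} The hardest part is Step 3: producing a stable endomorphism module at $\p$ that is not of finite length. The approach is to induct on $\dim R_\p$ by choosing $\p$ minimal in the singular locus. Then $\LL_\p$ is an isolated singularity of positive dimension, and for a non-projective $N$ over a Gorenstein isolated singularity of dimension $d>0$, $\underline{\End}(N)\cong\Ext^d(N,\Omega^d N)$-type duality (Auslander--Reiten) gives a nonzero module of finite length. That is the wrong outcome, so minimal $\p$ is not the right choice. Instead, take $\p$ to be a non-maximal prime that is a minimal prime of the singular locus, with $\dim R/\p\ge1$, and reverse the argument: work with the full subcategory and use $R/\p$-torsion-freeness. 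The stable endomorphism ring $\sEnd_\LL(M)$ is then supported on $V(\p)$ with $\dim\ge1$, so it has infinite length over $R$. Its localization at $\p$ is nonzero, and nonzero objects in the image under $\Phi$ cannot all be killed by $\m^n$. This shows $\sHomL(-,M)$ does not lie in $\AA_1$, which is more precisely argued via a $\m$-adic filtration $\m^n\sHomL(-,M)$ giving an infinite strictly descending chain in the quotient whose factors are nonzero modulo $\AA_0$. Making this chain argument rigorous is where I expect the real work to lie.
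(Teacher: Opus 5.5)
There is a genuine gap in Step 3, and it cannot be repaired inside your localization strategy. Steps 1--2 can be justified: for Gorenstein $\LL$, localization is a triangle functor $\sCML\to\underline{\CC}(\LL_\p)$, so it extends to an exact functor on finitely presented functors. That functor kills simple functors, because the corresponding AR sequences end in modules of $\PP_0$ and split at $\p\neq\m_R$.

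The trouble is Step 3. For $\underline{\End}_{\LL_\p}(M_\p)$ to have infinite length over $R_\p$, you need a prime $\q\subsetneq\p$ with $M_\q$ non-projective, i.e.\ a chain $\q\subsetneq\p\subsetneq\m_R$ with $\q$ in the singular locus. If the singular locus is one-dimensional, every $\LL_\p$ with $\p\neq\m_R$ is an isolated singularity. Then $\underline{\End}_{\LL_\p}(M_\p)$ always has finite length over $R_\p$, and your criterion is never met. Your ``Main obstacle'' paragraph in fact returns to the same minimal prime it had just rejected. Its observation that the image under $\Phi$ ``cannot be killed by $\m^n$'' is vacuous, since $\m_R R_\p=R_\p$; what you would need is that $\Phi(F)$ is not killed by any $(\p R_\p)^n$.

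Worse, in such cases no argument through $\Phi$ can succeed. Take $R=k[[x,y]]/(x^2)$. Its only non-maximal prime is $\p=(x)$, and $R_\p$ is artinian of length $2$. So $\underline{\CC}(R_\p)$ has a single indecomposable object, with stable endomorphism ring $\kappa(\p)$. Hence $\mod\underline{\CC}(R_\p)$ is semisimple and $\Phi$ sends every object to one of finite length. Localization forgets exactly the information you need.

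The missing idea is to stay inside $\AA/\AA_0$. The paper's key lemma (Lemma \ref{B9}) is that every simple object $\overline S$ of $\AA/\AA_0$ satisfies $\m_R\overline S=0$:
\begin{itemize}
\item the annihilator $\p$ of $\overline S$ is prime, and $\kappa(\p)$ embeds into the division ring $\End_{\AA/\AA_0}(\overline S)$;
\item if $S$ is a quotient of $\sHomL(-,X)$ with $X\in\PP_0$, this ring has finite length over $R$;
\item if $X\notin\PP_0$, it is finitely generated over $R$, because then $\Hom_{\AA/\AA_0}(\overline{\sHomL(-,X)},\overline G)\cong G(X)$;
\item either way $\p=\m_R$.
\end{itemize}
Hence every finite-length object of $\AA/\AA_0$ is killed by some $\m_R^n$.

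The Gorenstein hypothesis is then used not for localization but to get $\PP_0=\II_0$. For indecomposable $M\notin\PP_0$ this gives $\Omega M\notin\II_0$. So $F=\sHomL(-,M)$ has neither simple subfunctors nor simple quotients (Proposition \ref{B5}), and $\End_{\AA/\AA_0}(\overline F)\cong\End_\AA(F)\cong\sEnd_\LL(M)$. If $\KGdim\modsCML\le 1$, this forces $\m_R^n\sEnd_\LL(M)=0$, so $M$ is projective on the punctured spectrum, a contradiction.

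Your closing remark about the chain $\m_R^n\sHomL(-,M)$ points in this direction. Making it work requires exactly these two ingredients, neither of which appears in your proposal.
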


Thus, for a Gorenstein order, Krull--Gabriel dimension at most one can occur only in the isolated singularity case.  
This gives a connection between the low dimensional part of the Krull--Gabriel filtration and the singular locus of the base ring.

To prove the non-existence result for Krull--Gabriel dimension $1$, Herzog and Krause used the theory of generic modules due to Crawley-Boevey \cite{CB92}. 
Without constructing generic Cohen--Macaulay modules, we obtain the following non-existence theorem for Krull--Gabriel dimension $1$ in the case of uncountable CM representation type. 

\begin{theorem}[Theorem \ref{C1}]\label{Intro3}
Let $\LL$ be an isolated singularity but not necessarily Gorenstein. 
Suppose that $\LL$ is of uncountable CM representation type. 
Then $\KGdim \modsCML > 1$. 
\end{theorem}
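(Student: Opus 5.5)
We argue by contradiction: assume $\KGdim \modsCML \le 1$. Since $\LL$ is of uncountable CM representation type, it is in particular not of finite type. Auslander's theorem, in the form available for isolated singularities, says that $\KGdim \modsCML = 0$ exactly when $\sCML$ has only finitely many indecomposables (equivalently, every finitely presented functor has finite length). Hence we may assume $\KGdim \modsCML = 1$. Write $\SS$ for the Serre subcategory of finite-length functors in $\modsCML$. Then the quotient $\modsCML/\SS$ is a nonzero abelian category in which every object has finite length.

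The main tool is the simple functors. Because $\LL$ is an isolated singularity, $\sCML$ is Hom-finite over $R$, in the sense that the stable Hom modules have finite length. It also has almost split sequences. So for each indecomposable non-projective $M$, the simple functor $S_M=\sHom_\LL(-,M)/\rad(-,M)$ is finitely presented and lies in $\SS$. A functor of finite length is then built from finitely many $S_M$, and it has nonzero value only at finitely many indecomposables, up to isomorphism.

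The plan is to use countability. First, we show that $\modsCML$ has only countably many isomorphism classes of objects in a suitable sense, or more usefully that $\modsCML/\SS$ has only countably many simple objects up to isomorphism. A finitely presented functor is the cokernel of a map $\sHom_\LL(-,X)\to\sHom_\LL(-,Y)$. Since every object of $\modsCML/\SS$ has finite length, each representable $\sHom_\LL(-,X)$ has a finite filtration whose factors are simple in the quotient. Next, we show that every indecomposable $N$ in $\sCML$ is detected by some simple object of $\modsCML/\SS$, or by some $S_M$ when $N$ is one of the $M$ involved. The argument goes as follows: $\sHom_\LL(-,N)$ does not have finite length once infinitely many indecomposables exist, because by Harada–Sai-type arguments or the Auslander theorem again, it has nonzero value at infinitely many indecomposables. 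So $\sHom_\LL(-,N)$ has a finite filtration in the quotient. Then we would like to bound, for each simple $F$ of the quotient, the set of indecomposables $N$ whose representable functor has $F$ as a top. Here I would use the classical Krause/Herzog idea that a minimal non-finite-length functor $F$ singles out a "generic" point. With completeness of $R$ and Hom-finiteness, each $F$ should correspond to at most countably many $N$ through minimal approximations. Combining these steps, together with the existence of only countably many $S_M$, would give that $\CML$ has only countably many indecomposables. That contradicts the uncountable CM type.

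The main obstacle is this bounding step: controlling how many indecomposables can map onto a given simple object of the quotient without using generic modules. What I would try first is to attach to each indecomposable $N$ a pair $(X,\phi)$ with $X$ from a fixed countable set and $\phi$ in a finitely generated $R$-module. Over a complete local ring, such a module is not countable in general, so the count has to use Krull–Schmidt plus the fact that the isoclass is determined by data of finite length, which again relies on the isolated singularity assumption. If this is too delicate, an alternative is to show directly that each functor $\sHom_\LL(-,N)$ of Krull–Gabriel dimension $1$ has only finitely many non-isomorphic "points". I would then realize uncountably many indecomposables inside a one-parameter family, which forces a subfunctor chain violating finite length in the quotient.
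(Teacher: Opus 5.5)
\textbf{There is a genuine gap.} Your reduction to $\KGdim \modsCML = 1$ is fine. So are your remarks on finite-length functors: they are built from finitely many simple functors $S_M$ and hence have finite support. The decisive step, however, is missing, as you suspect.

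Your counting scheme needs two facts you never establish, where $\AA=\modsCML$ and $\AA_0$ is your $\SS$:
\begin{enumerate}[\rm(1)]
\item $\AA/\AA_0$ has only countably many simple objects up to isomorphism;
\item each such simple object is ``attached'' to only countably many indecomposables $N$.
\end{enumerate}
The tools you suggest do not supply these. Pairs $(X,\phi)$ with $\phi$ in a finitely generated $R$-module range over an uncountable set, as you note yourself. The one-parameter-family alternative needs a way to produce such families from uncountable CM type, which you do not give. It amounts to redoing the Herzog--Krause generic-module argument in the CM setting without its ingredients. Your side claim that every $\sHomL(-,N)$ has infinite length is also unjustified, though it is not needed.

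The missing idea is to count \emph{supports} rather than representing objects. The key statement is: if $\overline{F}$ is nonzero and simple in $\AA/\AA_0$, then $\Supp F$ is countable. The paper proves this with radical and socle series.
\begin{itemize}
\item Put $\Theta(F)=\bigcap_n \rad^n F$. Simplicity of $\overline{F}$ forces $\Theta(F)=\bigcup_n \soc^n \Theta(F)$.
\item To see this, suppose some morphism $\sHomL(-,Y)\to\Theta(F)$ has image $G$ not contained in $\bigcup_n \soc^n\Theta(F)$. Then $G$ cannot have finite length, since a finite-length subfunctor of $\Theta(F)$ lies in some $\soc^n\Theta(F)$.
\item So $F/G\in\AA_0$ by simplicity of $\overline{F}$. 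Hence $\rad^n F\subseteq G\subseteq \Theta(F)$ for $n\gg 0$.
\item This contradicts the fact that the radical series of a functor of infinite length is strictly decreasing (Lemma~\ref{C4}(3)).
\end{itemize}
Thus $F$ is exhausted by countably many semisimple layers of finite length: the socle layers of $\Theta(F)$ below, and the tops of the $\rad^i F$ above. Each layer has finite support, so $\Supp F$ is countable.

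To finish, fix a \emph{single} object $X$, namely a right $\CML$-approximation of $\LL/\rad\LL$. It satisfies $\sHomL(M,X)\neq 0$ for every nonprojective indecomposable $M$. Take a composition series of $\overline{\sHomL(-,X)}$ in $\AA/\AA_0$ and lift it to short exact sequences in $\AA$ via Proposition~\ref{A2}(2). Pseudo-isomorphic functors have supports with finite symmetric difference. Hence $\Supp \sHomL(-,X)$, which contains all nonprojective indecomposables, is countable. This contradicts uncountable type, and it never requires counting the simple objects of the quotient or bounding how many $N$ map onto each of them.
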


Our approach provides an alternative to the generic module method of Herzog and Krause. 
Rather than passing to generic or pure-injective objects in the ambient functor category, our argument is driven by the structure of finitely presented functors on the stable Cohen--Macaulay category.
The key idea is to analyze their socle and radical series and to make essential use of the direct limits of the socle series and the inverse limit of the radical series.
This enables us to control the supports of simple objects in the quotient category without introducing generic Cohen--Macaulay modules. 

Even if $\LL$ is Gorenstein and an isolated singularity, it is not clear in general whether countably infinite CM representation type can occur. 
In contrast, over an uncountable algebraically closed field, a finite dimensional algebra is either of finite representation type or  of uncountable representation type by the second Brauer--Thrall theorem. 
Combining Theorem \ref{Intro2} and Theorem \ref{Intro3}, if $\LL$ is a Gorenstein order, Krull--Gabriel dimension $1$ forces that $\LL$ is an isolated singularity and of countably infinite CM representation type. 
In this sense, the Krull--Gabriel dimension $1$ case provides a new perspective on the well-known countable Cohen--Macaulay representation type problem.

As an application, we also obtain a stable analogue of the non-existence theorem for finite dimensional algebras.

\begin{corollary}\label{C11_intro}\label{Intro4}
Let $A$ be a finite dimensional algebra over an uncountable algebraically closed field.
Then $\KGdim \mod \underline{\mod A} \neq 1$. 
\end{corollary}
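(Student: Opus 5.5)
The plan is to reduce Corollary \ref{Intro4} to Theorem \ref{Intro1} and Theorem \ref{Intro3} by splitting according to the representation type of $A$. Since $k$ is algebraically closed and uncountable, the second Brauer--Thrall theorem gives a dichotomy: either $A$ is of finite representation type, or $A$ has uncountably many isoclasses of indecomposable modules, indeed infinitely many dimensions each with infinitely many, in fact uncountably many, indecomposables.

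\textbf{Finite type.} If $A$ is of finite representation type, then $\underline{\mod A}$ has only finitely many indecomposable objects up to isomorphism. Every finitely presented functor on $\underline{\mod A}$ therefore has finite length. By Auslander's argument \cite{A80}, transported to the stable category, this gives $\KGdim \mod \underline{\mod A} \le 0$, so the dimension is not $1$. Concretely, $\mod \underline{\mod A}$ is a length category. The corresponding functor category over $\mod A$ has finite length by \cite{A80}, and finite presentation passes to the stable quotient.

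\textbf{Infinite type.} Here the aim is to view $A$ as an order over the zero-dimensional complete local ring and apply Theorem \ref{Intro3}. Since $A$ need not be local, I would take $R$ to be a suitable finite-dimensional complete local commutative ring over which $A$ is a module-finite algebra. The most natural choice is $R=k$: it is a field, hence a complete Cohen--Macaulay local ring of dimension $0$. Then:
\begin{itemize}
\item $A$ is an $R$-order;
\item $\CML=\mod A$, because every finitely generated module is maximal Cohen--Macaulay in dimension $0$;
\item $\sCML=\underline{\mod A}$;
\item $A$ is an isolated singularity, since the punctured spectrum of $R$ is empty, so the condition is vacuous.
\end{itemize}
Uncountable CM representation type is then exactly uncountable representation type of $A$. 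We must still check that the relevant notion of type counts indecomposables of the stable category. Projective indecomposables are finitely many, so removing them still leaves uncountably many indecomposables. Theorem \ref{Intro3} then gives $\KGdim \mod \underline{\mod A} > 1$.

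\textbf{Main obstacle.} The hard step is the precise translation of hypotheses. I need to confirm that the paper's definitions of ``order'' and ``isolated singularity'' allow $\dim R = 0$, and that ``uncountable CM representation type'' is defined so that Brauer--Thrall II supplies it. If the definitions require $\dim R\ge 1$, I would instead take $R=k[[t]]$ and $\LL=A[[t]]$. This fails to be an isolated singularity in general, so that fallback would need more work. I therefore expect the dimension-zero case to be admissible, and would check it directly against the definitions when they appear.
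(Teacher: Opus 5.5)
Your proposal is correct and is essentially the paper's argument: you regard $A$ as an order over $R=k$, so that $\CML=\mod A$ and the isolated singularity condition is vacuous. A field is a complete Cohen--Macaulay local ring with canonical module, so $\dim R=0$ is admissible and your fallback to $A[[t]]$ is unnecessary. You then use Brauer--Thrall II to split into the finite type case, where $\KGdim \mod \underline{\mod A}=0$ (the paper cites Remark \ref{B11}, while you argue directly from finitely many indecomposables), and the uncountable type case, where Theorem \ref{C1} gives $\KGdim \mod \underline{\mod A}>1$.
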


Here $\underline{\mod A}$ denotes the stable category of $\mod A$ modulo projective modules.

The paper is organized as follows.
In Section \ref{A}, we recall basic facts on quotient categories and realize a quotient category of $\modsCC$ as a subcategory of $\ModsCC$ (Theorem \ref{A8}).
In Section \ref{B}, we investigate the endomorphism rings of simple objects in $\modsCML/(\modsCML)_0$ and prove Theorem \ref{B10}.
Section \ref{C} is devoted to the proof of Theorem \ref{C1}.

\section{Preliminaries}\label{A}

For an abelian category, all subcategories are assumed to be full. 
Let $\AA$ be an abelian category. 
We say that a subcategory $\SS$ of $\AA$ is a Serre subcategory if $\SS$ is closed under taking subobjects, quotient objects, and extensions. 

\begin{definition}\label{A1}
We say that a morphism $f: X \to Y$ in $\AA$ is a pseudo-monomorphism (resp. a pseudo-epimorphism) with respect to a Serre subcategory $\SS$ if $\Ker f$ (resp. $\Coker f$) belongs to $\SS$. 
We also say that a morphism $f: X \to Y$ in $\AA$ is a pseudo-isomorphism if $f$ is a pseudo-monomorphism and a pseudo-epimorphism, that is, $\Ker f$ and $\Coker f$ belong to $\SS$.
\end{definition}

We note that the set $\Sigma$ of pseudo-isomorphisms forms a multiplicative system in $\AA$ (cf. \cite[2.1]{SY18}). 
Thus, we define the quotient category $\AA / \SS$ by the localization $\Sigma^{-1} \AA$. 
That is, the objects are the same as $\AA$ and the morphism set in $\AA / \SS$ is given by 
$$
\HomAS (\overline{X}, \overline{Y}) = \{ f a ^{-1} \mid a \in \Sigma \} / \sim .
$$
Here $f a ^{-1}$ denotes the diagram $X \xleftarrow{a} X' \xrightarrow{f} Y$ for some $X' \in \AA$, and the equivalence relation $\sim$ is generated by $f a^{-1} \sim (fb)(ab)^{-1}$ where $b: X'' \to X'$ is a morphism in $\Sigma$. 

Note that $\AA/ \SS$ is an abelian category and there is a natural exact functor $\AA \to \AA /\SS$. 
It is well known that there are isomorphisms (see \cite[Remark 7.1.17(ii)]{KS}):
$$
\HomAS (\overline{X}, \overline{Y}) \cong \colim_{{P_X}^{op} \ni (X' \to X)} \Hom_{\AA} (X', Y)\cong \colim_{ P_Y \ni (Y \to Y')} \Hom_{\AA} (X, Y'),  
$$ 
where, for $X \in \AA$, $P_X$ denotes the category whose objects are pseudo-isomorphisms $f:X\longrightarrow X'$ in $\AA$, and whose morphisms from $f : X\to X'$ to $g: X \to X''$ are morphisms $h: X' \to X''$ such that $h\circ f=g$.
Since pseudo-isomorphisms form a multiplicative system, $P_X$ is a filtered category.

\begin{remark}\label{A3}
By the universality of the quotient category, one can show that $f a^{-1}: \overline{X} \to \overline{Y}$ is a monomorphism in $\AA / \SS$ if and only if $f$ is a pseudo-monomorphism. 
Similarly, $f a^{-1}: \overline{X} \to \overline{Y}$ is an epimorphism in $\AA / \SS$ if and only if $f$ is a pseudo-epimorphism. 
\end{remark}

We consider the subcategories $\SS ^{\perp} = \{ X \in \AA \mid  \HomA (S, X) = 0 \ \text{for all } S \in \SS \}$ and ${}^{\perp}\!\SS  = \{ X \in \AA \mid  \HomA (X, S) = 0 \ \text{for all } S \in \SS \}$.

\begin{proposition}\label{A2}
\begin{enumerate}[\rm(1)]
\item Let $X \in {}^{\perp}\! \SS$ and $Y \in \SS ^{\perp}$. Then $\HomAS (\overline{X}, \overline{Y}) \cong \HomA (X, Y)$. 
\item For an exact sequence $0 \to \overline{X} \to \overline{Y} \to \overline{Z} \to 0$ in $\AA / \SS$, there exist $X' , Z' \in \AA$ such that there exists an exact sequence $0 \to X' \to Y \to Z' \to 0$ in $\AA$ with $\overline{X'} \cong \overline{X}$, $\overline{Z'} \cong \overline{Z}$ in $\AA/\SS$. 
\end{enumerate}
\end{proposition}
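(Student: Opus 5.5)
For (1), I will use the colimit description of morphisms in $\AA/\SS$,
$$\HomAS(\overline{X},\overline{Y}) \cong \colim_{P_Y \ni (Y \to Y')} \HomA(X, Y').$$
The plan is to show that every transition map in this colimit is an isomorphism when $X \in {}^{\perp}\!\SS$ and $Y \in \SS^{\perp}$. Since $P_Y$ is filtered and contains the initial-like object $\mathrm{id}_Y$, the colimit will then equal $\HomA(X,Y)$.

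The first ingredient is injectivity of $\HomA(X,Y) \to \HomA(X,Y')$ for any pseudo-isomorphism $s: Y \to Y'$. Since $\Ker s \in \SS$ is a subobject of $Y \in \SS^\perp$, the inclusion $\Ker s \to Y$ is zero, so $\Ker s = 0$ and $s$ is a monomorphism.

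The second ingredient is surjectivity. Given $g: X \to Y'$, the composite $X \to Y' \to \Coker s$ is zero because $\Coker s \in \SS$ and $X \in {}^{\perp}\!\SS$. Hence $g$ factors through $\Ker(Y' \to \Coker s) = \Im s \cong Y$.

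Combining the two, every map $\HomA(X,Y) \to \HomA(X,Y')$ is bijective. Compatibility with the morphisms of $P_Y$ is then automatic, since all maps are induced by composition. Filteredness gives $\colim \cong \HomA(X,Y)$, with the isomorphism induced by the canonical functor $\AA \to \AA/\SS$.

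For (2), I will first represent the monomorphism $\overline{X} \to \overline{Y}$ as a roof $fa^{-1}$ with $a: X_1 \to X$ a pseudo-isomorphism and $f: X_1 \to Y$. By Remark \ref{A3}, $f$ is a pseudo-monomorphism. I then set $X' = \Im f \subseteq Y$ and $Z' = Y/X' = \Coker f$, so that $0 \to X' \to Y \to Z' \to 0$ is exact in $\AA$. Since $X_1 \to X'$ has kernel $\Ker f \in \SS$ and is epi, it is a pseudo-isomorphism, which gives $\overline{X'} \cong \overline{X_1} \cong \overline{X}$.

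It then remains to show $\overline{Z'} \cong \overline{Z}$. The exact functor $\AA \to \AA/\SS$ sends $0\to X'\to Y\to Z'\to 0$ to an exact sequence, so $\overline{Z'}$ is a cokernel of $\overline{X'} \to \overline{Y}$. Under the isomorphism $\overline{X'} \cong \overline{X}$, this map agrees with the given monomorphism $\overline{X}\to\overline{Y}$, because the roof $fa^{-1}$ equals the image map $X' \hookrightarrow Y$ precomposed with $\overline{X_1\to X'}$ and $\overline{a}^{-1}$. Uniqueness of cokernels then yields $\overline{Z'} \cong \overline{Z}$.

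The main care point is this last compatibility of the roof with the factorization through the image. It is a direct check with the equivalence relation on roofs, and I do not expect it to be a real obstacle.
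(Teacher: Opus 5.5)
Your proof is correct and follows the paper's argument. Part (2) is the same construction ($X'=\Im f$, $Z'=\Coker f$, then a comparison of cokernels in $\AA/\SS$). Part (1) uses the same two orthogonality facts, but applies them on the target side through the colimit over $P_Y$: you show each pseudo-isomorphism $Y\to Y'$ is mono and every map $X\to Y'$ factors through it, whereas the paper works with roofs $X\leftarrow X'\to Y$. A small advantage of your version is that it gives injectivity explicitly, which the paper only describes as following by a similar argument.
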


\begin{proof}
(1) Let $f a^{-1} \in \HomAS (\overline{X}, \overline{Y})$. 
Then we have the diagram: $X \xleftarrow{a} X' \xrightarrow{f} Y$. 
Since $a$ is a pseudo-isomorphism, both $\Ker a$ and $\Coker a$ belong to $\SS$. 
Therefore $a$ is an epimorphism since $X \in {}^{\perp}\! \SS$.
Furthermore, since the image $f(\Ker a)$ belongs to $\SS$ and $Y \in \SS^{\perp}$, it follows that $f(\Ker a) = 0$. 
Consequently, $f$ factors through $a : X' \to X$. 
Hence, the natural map from $\HomA (X, Y)$ to $\HomAS (\overline{X}, \overline{Y})$ is surjective. 
The injectivity of this map follows by an argument similar to the one above.

(2) Suppose that $f a^{-1}: \overline{X} \to \overline{Y}$, represented by $X \xleftarrow{a} X'' \xrightarrow{f} Y$ is a monomorphism in $\AA/ \SS$. 
Set $X' = \Im f$. 
Then $X'$ is a subobject of $Y$, and the natural epimorphism $\pi : X'' \to X'$ is a pseudo-isomorphism since $\Ker f$ belongs to $\SS$. 
Set $Z' = Y/ X'$. 
Note that $\overline{f} =\overline{i \circ \pi}= \overline{i} \circ \overline{\pi}$ in $\AA/\SS$. 
Since $\overline{\pi}$ is an isomorphism in $\AA/\SS$, we have $\overline{Z} \cong \Coker \overline{f} \cong \Coker ( \overline{i} \circ \overline{\pi}) \cong \Coker \overline{i} \cong \overline{Z'}$. 
Also we have $\overline{X} \cong \overline{X'}$. 
\end{proof}

The following corollary is useful for showing that an object is simple in a quotient category. 

\begin{corollary}\label{A4}
Let $\AA$ be an abelian category and $\SS$ a Serre subcategory. 
An object $Y$ of $\AA \setminus \SS$ is simple in $\AA/ \SS$ if and only if, for every short exact sequence $0 \to X' \to Y \to Z' \to 0$, either $X'$ or $Z'$ belongs to $\SS$. 
\end{corollary}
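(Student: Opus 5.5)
The plan is to prove both directions by comparing subobjects of $\overline{Y}$ in $\AA/\SS$ with genuine subobjects of $Y$ in $\AA$. Throughout I use three facts. First, an object lies in $\SS$ if and only if its image in $\AA/\SS$ is zero; this follows from the description of $\HomAS$ as a colimit, since the identity of $\overline{S}$ vanishes exactly when $S \to 0$ is a pseudo-isomorphism. Second, Remark \ref{A3} characterizes monomorphisms and epimorphisms in $\AA/\SS$. Third, the quotient functor $\AA \to \AA/\SS$ is exact. The hypothesis $Y \notin \SS$ says precisely that $\overline{Y} \neq 0$. So simplicity of $\overline{Y}$ amounts to this: every subobject of $\overline{Y}$ is either $0$ or the whole of $\overline{Y}$, equivalently every short exact sequence ending in $\overline{Y}$ has zero first or zero last term.

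For ($\Leftarrow$), take an arbitrary short exact sequence $0 \to \overline{X} \to \overline{Y} \to \overline{Z} \to 0$ in $\AA/\SS$. By Proposition \ref{A2}(2) there is an exact sequence $0 \to X' \to Y \to Z' \to 0$ in $\AA$ with $\overline{X'} \cong \overline{X}$ and $\overline{Z'} \cong \overline{Z}$. The hypothesis gives $X' \in \SS$ or $Z' \in \SS$. In the first case $\overline{X} = 0$; in the second $\overline{Z} = 0$, so $\overline{X} \to \overline{Y}$ is an isomorphism. Since every subobject of $\overline{Y}$ arises as the first term of such a sequence (namely its inclusion together with its cokernel), $\overline{Y}$ is simple.

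For ($\Rightarrow$), suppose $\overline{Y}$ is simple and let $0 \to X' \to Y \to Z' \to 0$ be exact in $\AA$. Applying the exact quotient functor gives an exact sequence $0 \to \overline{X'} \to \overline{Y} \to \overline{Z'} \to 0$. Simplicity forces either $\overline{X'} = 0$, so $X' \in \SS$, or $\overline{X'} \to \overline{Y}$ is an isomorphism, in which case $\overline{Z'} = 0$ and $Z' \in \SS$.

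I expect the only real care to lie in the first fact, that $\overline{S} = 0$ if and only if $S \in \SS$. One direction is immediate, since $0 \to S$ is a pseudo-isomorphism. For the converse, if $\overline{S} = 0$ then $\mathrm{id}_S$ becomes zero in the colimit. This means there is a pseudo-isomorphism $a : S' \to S$ with $\mathrm{id}_S \circ a = a = 0$, so $S = \Coker a \in \SS$. Everything else is formal.
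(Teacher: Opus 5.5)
Your proof is correct and follows the same route as the paper, which simply says the statement follows from Proposition~\ref{A2}(2) (citing also \cite[Lemma 1.1]{GR74}). You additionally spell out what the paper leaves implicit: the exactness of the quotient functor for the forward direction, and the fact that $\overline{S}=0$ in $\AA/\SS$ if and only if $S\in\SS$.
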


\begin{proof}
The statement follows from Proposition \ref{A2}(2). 
See also \cite[Lemma 1.1]{GR74}. 
\end{proof}

\begin{proposition}\label{A5}
Suppose that the inclusion functor $\SS \to \AA$ admits a right adjoint functor $\gamma$. 
For every $\overline{X} \in \AA/ \SS$, there exists $X' \in \SS ^{\perp}$ such that $\overline{X} \cong \overline{X'}$ in $\AA/\SS$. 
\end{proposition}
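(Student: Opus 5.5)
The plan is to take $X' = X/\gamma(X)$, where $\gamma(X)\to X$ is the counit of the adjunction. The key point is that $\gamma(X)$ is the largest subobject of $X$ lying in $\SS$. First, I check that the counit $\varepsilon_X : \gamma(X) \to X$ is a monomorphism. Its kernel $K$ is a subobject of $\gamma(X)\in\SS$, so $K \in \SS$. The inclusion $K \to \gamma(X)$ corresponds by adjunction to the composite $K \to \gamma(X) \to X$, which is zero. Since the adjunction correspondence is bijective and the zero map also corresponds to $0$, the inclusion $K\to\gamma(X)$ is zero, so $K=0$.

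Every morphism $S\to X$ with $S\in\SS$ factors uniquely through $\varepsilon_X$. In particular, any subobject of $X$ lying in $\SS$ is contained in $\gamma(X)$.

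Now set $X' = \Coker \varepsilon_X = X/\gamma(X)$. The projection $\pi: X\to X'$ has kernel $\gamma(X)\in\SS$ and zero cokernel, so it is a pseudo-isomorphism, and hence $\overline{X}\cong\overline{X'}$ in $\AA/\SS$.

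It remains to show $X'\in\SS^\perp$. Let $g:S\to X'$ with $S\in\SS$. The image $\Im g$ is a quotient of $S$, so it lies in $\SS$; it therefore suffices to show that $X'$ has no nonzero subobject in $\SS$. Let $U\subseteq X'$ be such a subobject and let $\tilde U = \pi^{-1}(U)\subseteq X$. Then there is an exact sequence $0\to\gamma(X)\to\tilde U\to U\to 0$, and since $\SS$ is closed under extensions, $\tilde U\in\SS$. By the maximality established above, $\tilde U\subseteq\gamma(X)$, so $U = \tilde U/\gamma(X) = 0$. Hence $g=0$.

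The only mildly delicate step is the monicity of the counit, that is, identifying $\gamma(X)$ with the largest $\SS$-subobject. This uses only that $\SS$ is closed under subobjects, together with the bijectivity of the adjunction correspondence. Once that is in place, the rest follows from extension closure of $\SS$ as above.
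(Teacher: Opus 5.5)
Your proof is correct and follows the same route as the paper: take $X' = X/\gamma(X)$, use that $\gamma(X)$ is the largest subobject of $X$ lying in $\SS$, and conclude that $X' \in \SS^{\perp}$. The only difference is that you prove the needed torsion-theoretic facts yourself (the counit is monic, and extension-closure forces $X'$ to have no nonzero $\SS$-subobject), whereas the paper cites them and phrases the last step as $\gamma(X')=0$, so that $\HomA(S,X')\cong\HomA(S,\gamma(X'))=0$.
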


\begin{proof}
Let $X \in \AA$. 
Then we have a short exact sequence $0 \to \gamma (X) \to X \to X' \to 0$ where $X' = X / \gamma (X)$. 
By the standard argument of torsion theory, $\gamma (X)$ is the maximal subobject of $X$ that belongs to $\SS$, and thus $\gamma (X') = 0$ (e.g. \cite[Lemma 2.1]{Kr97}). 
Hence $\HomA (S, X') \cong \HomA (S, \gamma (X')) = 0$ for each $S \in \SS$. 
Therefore $X' \in \SS ^{\perp}$. 
It is clear that $\overline{X} \cong \overline{X'}$ in $\AA/ \SS$. 
\end{proof}

In the rest of the paper, we are interested in a category of (finitely presented) functors which is abelian. 
For an additive category $\CC$, we denote by $\ModCC$ the category of {\it contravariant} additive functors from $\CC$ to the category of abelian groups, with natural transformations as morphisms. 
We also denote by $\modCC$ the full subcategory of $\ModCC$ consisting of all finitely presented functors. 
$$
\modCC  = \{ F : \CC \to \mathrm{Ab} \mid \Hom _\CC (\  , N) \to \Hom _\CC (\  , M) \to F \to 0 \ \text{with $M, N \in \CC$} \}.
$$
A skeletally small additive category $\CC$ is called {\it coherent} when $\modCC$ is abelian. 

Let $\CC$ be coherent and set $\AA = \modCC$. 
Let $\SS$ be a Serre subcategory of $\AA$. 
At the end of this section, we realize $\AA/\SS$ as a subcategory of $\ModCC$. 

Since $\ModCC$ admits all direct limits, we make the following definition.   

\begin{definition}\label{A9}
Let $F \in \AA$. 
We define $\displaystyle \widetilde{F} = \colim_{P_F \ni (F \to F')} F'$. 
We also denote by $\widetilde{\AA }$ the full subcategory of $\ModCC$ consisting of functors $\widetilde{F}$ for $F \in \AA$. 
\end{definition}

\begin{lemma}\label{A6}
Let $F$, $G \in \AA$. 
There is a natural isomorphism: $$\Hom_{\AA /  \SS } ( \overline{G}, \overline{F}) \cong \Hom_{\ModCC} (G, \widetilde{F}).$$ 
\end{lemma}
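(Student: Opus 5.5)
We combine the colimit description of morphisms in the quotient category with the fact that finitely presented functors are compact objects in $\ModCC$. Recall from above that
$$
\HomAS(\overline{G}, \overline{F}) \cong \colim_{P_F \ni (F \to F')} \HomA(G, F'),
$$
and that $P_F$ is filtered, because pseudo-isomorphisms form a multiplicative system. Since $\AA = \modCC$ is a full subcategory of $\ModCC$, we have $\HomA(G,F') = \Hom_{\ModCC}(G,F')$. So it suffices to prove
$$
\colim_{P_F} \Hom_{\ModCC}(G, F') \cong \Hom_{\ModCC}\bigl(G, \colim_{P_F} F'\bigr) = \Hom_{\ModCC}(G, \widetilde{F}),
$$
which is exactly the statement that the finitely presented functor $G$ commutes with filtered colimits in $\ModCC$.

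The compactness step runs as follows.

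\textbf{Representables.} For a representable functor $\Hom_\CC(\ ,M)$, the Yoneda lemma gives $\Hom_{\ModCC}(\Hom_\CC(\ ,M), H) \cong H(M)$. Colimits in $\ModCC$ are computed pointwise, so $(\colim F')(M) = \colim F'(M)$. Hence representables commute with all colimits.

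\textbf{Finitely presented $G$.} Take a presentation $\Hom_\CC(\ ,N) \to \Hom_\CC(\ ,M) \to G \to 0$. Applying $\Hom_{\ModCC}(-,H)$ yields a left exact sequence
$$
0 \to \Hom(G,H) \to H(M) \to H(N).
$$
Filtered colimits of abelian groups are exact, so taking $\colim_{P_F}$ of these sequences with $H = F'$ and comparing with the corresponding sequence for $H = \widetilde{F}$ gives the isomorphism via the five lemma.

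It remains to check naturality in $F$ and $G$. In $G$, naturality is immediate because every identification used is functorial in $G$. In $F$, a morphism $F_1 \to F_2$ induces compatible maps on both sides: via pushout of pseudo-isomorphisms, which gives a functor $P_{F_1} \to P_{F_2}$ up to cofinality. This is where I expect the most care is needed. If the paper only requires naturality in $G$, this point can be sidestepped, and the main content is the filteredness of $P_F$ together with the compactness argument above.

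One should also note that the colimit defining $\widetilde{F}$ is over a possibly large but essentially small category. Since $\CC$ is skeletally small and $\modCC$ is then essentially small, $P_F$ is essentially small and the colimit exists in $\ModCC$.
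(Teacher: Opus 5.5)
Your proof is correct and is essentially the paper's argument: both treat representables using the colimit description of $\Hom_{\AA/\SS}$, Yoneda and pointwise colimits, and then pass to finitely presented $G$ through a presentation and a comparison of kernels. The only difference is where the presentation is used. The paper applies the left exact functor $\Hom_{\AA/\SS}(-,\overline{F})$ to the image of the presentation in $\AA/\SS$, relying on exactness of $\AA\to\AA/\SS$. You instead apply the colimit formula to $G$ itself and use exactness of filtered colimits over $P_F$. Your version has the mild advantage of giving the isomorphism explicitly and independently of the presentation: a roof $G\to F'\leftarrow F$ goes to $G\to F'\to\widetilde{F}$. This makes the naturality in $G$ that Theorem~\ref{A8} relies on transparent.
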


\begin{proof}
Suppose that $G \cong \Hom _{\CC}(\  , M)$ for some $M \in \CC$. 
Then we have
$$
\begin{array}{ll}
\Hom_{\AA /  \SS } ( \overline{G}, \overline{F}) &\cong \colim_{P_F} \Hom_{\AA} (\Hom _{\CC} (\  , M), F' )\\
&\cong \colim_{P_F}  F'(M) = \widetilde{F} (M) \cong \Hom_{\ModCC} (\Hom _{\CC}(\  , M), \widetilde{F}). 
\end{array}
$$
Since $G$ is finitely presented, there exists a presentation: $\Hom _{\CC}(\  , N) \to \Hom _{\CC}(\ , M) \to G \to 0$. 
Applying $\Hom_{\AA / \SS} (\  ,  \overline{F})$ to the presentation, we obtain the following commutative diagram: 
$$
\xymatrix@C=0.6em@R=2em{
0\ar[r]&\Hom_{\AA/\SS}(\overline{G},\overline{F})\ar[r]\ar[d]&\Hom_{\AA/\SS}(\overline{\Hom_{\CC}(\ ,M)}, \overline{F} )\ar[r]\ar[d]_{\cong}&\Hom_{\AA/\SS}(\overline{\Hom_{\CC}(\ ,N)}, \overline{F}) \ar[d]^{\cong} \\
0\ar[r] &\Hom_{\ModCC}(G,\widetilde{F})\ar[r] &\Hom_{\ModCC}(\Hom_{\CC}(\ ,M),\widetilde{F})\ar[r]&\Hom_{\ModCC}(\Hom_{\CC}(\ ,N), \widetilde{F}).}
$$
The second and third vertical morphisms are isomorphisms, and hence the first vertical morphism is also an isomorphism in the assertion. 
\end{proof}

The following corollary follows immediately from Lemma \ref{A6}. 

\begin{corollary}\label{A7}
If $G \in \SS$, then $\Hom_{\ModCC} (G, \widetilde{F})=0$. 
\end{corollary}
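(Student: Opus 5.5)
The plan is to deduce Corollary \ref{A7} directly from Lemma \ref{A6} by taking $G \in \SS$ there. Lemma \ref{A6} gives a natural isomorphism
$$
\Hom_{\ModCC}(G, \widetilde{F}) \cong \Hom_{\AA/\SS}(\overline{G}, \overline{F}),
$$
so it suffices to show that the right-hand side vanishes whenever $G \in \SS$.

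The key step is to observe that $\overline{G} \cong 0$ in $\AA/\SS$. The zero morphism $0 \to G$ has kernel $0$ and cokernel $G$, both of which lie in $\SS$. Hence it is a pseudo-isomorphism, so it becomes an isomorphism under the localization $\AA \to \AA/\SS$. Therefore $\overline{G}$ is a zero object, and $\Hom_{\AA/\SS}(\overline{G}, \overline{F}) = 0$.

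As a cross-check, one can use the colimit description $\Hom_{\AA/\SS}(\overline{G},\overline{F}) \cong \colim_{P_G^{op}} \HomA(G', F)$. The object $(0 \to G)$ lies in $P_G$, and every $(a\colon G' \to G)$ in $P_G$ receives a map from it, so in the colimit every element factors through $\HomA(0, F) = 0$.

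There is no real obstacle here. The only point requiring care is checking that $0 \to G$ is a pseudo-isomorphism, which uses nothing more than $0 \in \SS$ and $G \in \SS$.
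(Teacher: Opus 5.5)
Your proposal is correct and follows the paper's route exactly: the paper derives the corollary immediately from Lemma \ref{A6}, and the only extra input is $\overline{G}\cong 0$ in $\AA/\SS$ for $G\in\SS$, which your pseudo-isomorphism $0\to G$ establishes. Your colimit cross-check is also valid, since $(0\to G)$ is initial among pseudo-isomorphisms into $G$ and hence terminal in the indexing category of the colimit, but it is not needed.
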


For each $G \in \AA$, let $\eta_G : G \to \widetilde{G}$ be the natural morphism. 
We define $T: \widetilde{\AA } \to \AA/ \SS $ as follows. 
We set $T( \widetilde{F})=\overline{F}$. 
Note that $\overline{F}$ is determined uniquely. 
See the proof of Theorem \ref{A8} below. 
For a morphism $u: \widetilde{G} \to \widetilde{F}$, we consider $u \circ \eta_G : G \to \widetilde{F}$. 
By Lemma \ref{A6}, we have an isomorphism $\Phi_{G, F}: \Hom_{\AA /  \SS } ( \overline{G}, \overline{F}) \to \Hom_{\ModCC} (G, \widetilde{F})$. 
We define $T(u)$ to be the morphism $\Phi_{G, F}^{-1} (u \circ \eta_G)$.

\begin{theorem}\label{A8}
The functor $T$ is an equivalence of categories $\widetilde{\AA} \cong \AA/ \SS$. 
\end{theorem}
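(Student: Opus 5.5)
We will show three things: $T$ is well defined on objects, it is a functor, and it is fully faithful and essentially surjective. Essential surjectivity is immediate from the definition, since every object of $\AA/\SS$ has the form $\overline{F}$ with $F\in\AA$, and $T(\widetilde{F})=\overline{F}$.

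The first task is well-definedness: if $\widetilde{F}\cong\widetilde{G}$ in $\ModCC$, then $\overline{F}\cong\overline{G}$. The plan is to use Lemma \ref{A6} and Yoneda in $\AA/\SS$. For any $H\in\AA$ we have natural isomorphisms
$$\Hom_{\AA/\SS}(\overline{H},\overline{F})\cong\Hom_{\ModCC}(H,\widetilde{F})\cong\Hom_{\ModCC}(H,\widetilde{G})\cong\Hom_{\AA/\SS}(\overline{H},\overline{G}).$$
Every object of $\AA/\SS$ is of the form $\overline{H}$, and the isomorphisms are natural in $H$, because $\Phi_{H,F}$ is natural in $H$ by construction via presentations. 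So the representable functors $\Hom_{\AA/\SS}(-,\overline{F})$ and $\Hom_{\AA/\SS}(-,\overline{G})$ on $\AA/\SS$ are isomorphic, and Yoneda gives $\overline{F}\cong\overline{G}$. To make $T$ a genuine functor we fix a choice of such $F$ for each object of $\widetilde{\AA}$.

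Next we check functoriality. We must show $T(vu)=T(v)T(u)$ and $T(\mathrm{id})=\mathrm{id}$. For this we need to identify $\Phi$ with composition. For $\alpha\in\Hom_{\AA/\SS}(\overline{G},\overline{F})$, the element $\Phi_{G,F}(\alpha)$ is the map $G\xrightarrow{\eta_G}\widetilde{G}\xrightarrow{\widetilde{\alpha}}\widetilde{F}$. Here $\widetilde{\alpha}$ is defined as follows. Represent $\alpha$ as $f$ composed with $b^{-1}$, where $b:F\to F'$ is a pseudo-isomorphism and $f:G\to F'$. Then $\widetilde{\alpha}:\widetilde{G}\to\widetilde{F}$ is induced on the colimit: a pseudo-isomorphism $G\to G'$ is pushed out along $f$, and pushouts preserve pseudo-isomorphisms. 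Using this description, compatibility with composition follows from the naturality of $\eta$ and of the colimit maps.

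The core is full faithfulness, i.e.\ that
$$\Hom_{\ModCC}(\widetilde{G},\widetilde{F})\to\Hom_{\ModCC}(G,\widetilde{F}),\qquad u\mapsto u\circ\eta_G,$$
is bijective. Composed with $\Phi^{-1}$, this map is exactly $T$ on morphisms. Since $\widetilde{G}=\colim_{P_G}G'$, we have
$$\Hom(\widetilde{G},\widetilde{F})=\lim_{P_G}\Hom(G',\widetilde{F}).$$
So it suffices to show that for every pseudo-isomorphism $a:G\to G'$, the restriction $a^*:\Hom(G',\widetilde{F})\to\Hom(G,\widetilde{F})$ is bijective. Then the limit over the filtered category $P_G$ is just $\Hom(G,\widetilde{F})$, via the component at $\mathrm{id}_G$. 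The bijectivity of $a^*$ follows from Lemma \ref{A6}, because $a^*$ corresponds to
$$\overline{a}^*:\Hom_{\AA/\SS}(\overline{G'},\overline{F})\to\Hom_{\AA/\SS}(\overline{G},\overline{F}),$$
and $\overline{a}$ is an isomorphism in $\AA/\SS$. One can also argue directly: by Corollary \ref{A7}, $\Hom(\Ker a,\widetilde{F})=0$ and $\Hom(\Coker a,\widetilde{F})=0$, and an $\Ext^1$-vanishing argument handles the rest.

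The main obstacle is the naturality of $\Phi$ in the first variable, which is needed to identify $a^*$ with $\overline{a}^*$. We expect to settle it by checking naturality on representables, where it reduces to Yoneda, and then extending to finitely presented functors through the five-lemma diagram in the proof of Lemma \ref{A6}.
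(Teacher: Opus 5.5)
This is correct and is essentially the paper's proof. The core step is the same: for every pseudo-isomorphism $a\colon G\to G'$, the map $a^*\colon \Hom_{\ModCC}(G',\widetilde{F})\to\Hom_{\ModCC}(G,\widetilde{F})$ is bijective by Lemma \ref{A6}, so $\Hom_{\ModCC}(\widetilde{G},\widetilde{F})\cong\varprojlim_{P_G^{op}}\Hom_{\ModCC}(G',\widetilde{F})$ collapses to $\Hom_{\ModCC}(G,\widetilde{F})$ via $\eta_G$. Your additional checks fill in points the paper leaves implicit: well-definedness of $T$ on objects via Yoneda in $\AA/\SS$, functoriality via $\Phi_{G,F}(\alpha)=\widetilde{\alpha}\circ\eta_G$, and naturality of $\Phi$ in the first variable. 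The optional $\Ext^1$-vanishing aside is not needed.
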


\begin{proof}
It is clear that $T$ is dense. 
We show that $T$ is fully faithful. 
Let $p: G \to G'$ be a pseudo-isomorphism. 
Since $\overline{p}$ is an isomorphism in $\AA / \SS$, the induced map $p^{\ast} :\Hom_{\ModCC }(G',\widetilde{F}) \to \Hom_{\ModCC} (G,\widetilde{F})$ is an isomorphism by Lemma \ref{A6}. 
Since this holds for every $p:G\to G'$ in $P_G$, the canonical morphism $\eta_G : G \to \widetilde{G}$ induces an isomorphism: 
\begin{align*}
\Hom_{\ModCC} (\widetilde{G}, \widetilde{F}) &= \Hom_{\ModCC} (\colim {}_{P_G}G', \widetilde{F}) \\
 &\cong \varprojlim {}_{{P_G}^{op}} \Hom_{\ModCC} (G', \widetilde{F}) \cong \Hom_{\ModCC}(G,\widetilde{F}).
\end{align*}
The isomorphism is induced by composition with $\eta_G$.
Therefore, using Lemma \ref{A6} again, 
$$
\Hom_{\ModCC} (\widetilde{G}, \widetilde{F}) \cong \Hom_{\ModCC} (G, \widetilde{F}) \cong \Hom_{\AA /  \SS } ( \overline{G}, \overline{F}).  
$$
This shows that $T$ is fully faithful.  
\end{proof}

For later use, we state the following lemma without proof. 

\begin{lemma}\label{A10}
Let $0\to L\to M \to  N \to 0$ be a short exact sequence in $\CC$. 
Suppose that $L\cong L'\oplus I$ and $N\cong N'\oplus P$, where $I$ is an injective object and $P$ is a projective object in $\CC$. 
Then the sequence is isomorphic to the direct sum of $0\to I \to  I\to0\to0$, $0 \to 0 \to P \to P \to 0$, and $0\to L'\to M'\to N' \to 0$ in $\CC$. 

In particular, for $F \in \AA$ with $0 \to \Hom_{\CC} (\  , L) \to \Hom_{\CC} (\  , M) \to \Hom_{\CC} (\   , N) \to F \to 0$, after replacing the representing short exact sequence, we may assume that $L$ has no injective direct summand and that $N$ has no projective direct summand.
\end{lemma}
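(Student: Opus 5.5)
Lemma \ref{A10} concerns an exact category $\CC$ (here, in practice, $\CML$ or an exact subcategory of an abelian category). I would prove it by splitting off the injective and projective summands one at a time.

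\emph{Splitting off $I$.} Let $\alpha : L \to M$ be the inflation and $\beta : M \to N$ the deflation. Write $\iota : I \to L$ and $\pi_I : L \to I$ for the summand inclusion and projection. Since $I$ is injective and $\alpha\iota$ is an inflation, the morphism $\pi_I$ extends along $\alpha$. That is, there is $g : M \to I$ with $g\alpha = \pi_I$. Then $g(\alpha\iota) = 1_I$, so $\alpha\iota$ is a split monomorphism and $M \cong I \oplus M_1$ with $M_1 = \Ker g$. Because $g\alpha = \pi_I$, the inflation $\alpha$ maps $L'$ into $\Ker g = M_1$. So, in matrix form with respect to $L = L'\oplus I$ and $M = M_1 \oplus I$, we get
\[
\alpha = \begin{pmatrix} \alpha' & 0 \\ 0 & 1_I \end{pmatrix} .
\]
Here the off-diagonal term can be taken to be $0$ because $g\alpha|_{L'} = 0$ and $g\alpha|_I = 1$. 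This may need a change of basis on $M$: replace the complement $M_1$ by $\Ker g$ and adjust the image of $I$ by $\alpha\iota$. Hence the sequence is isomorphic to the direct sum of $0 \to I \to I \to 0 \to 0$ and $0 \to L' \to M_1 \to N \to 0$. The second sequence is exact, since a direct summand of a conflation is a conflation (or, in the abelian setting, by taking kernels and cokernels of the split diagram).

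\emph{Splitting off $P$.} Dually, $P$ is projective, so the summand inclusion $P \to N$ lifts along the deflation $M_1 \to N$. This splits $M_1 \cong M' \oplus P$ compatibly with $N = N' \oplus P$, and the kernel is unchanged, equal to $L'$. This yields the third summand $0 \to 0 \to P \to P \to 0$ and the sequence $0\to L'\to M'\to N'\to 0$. The only delicate point in the whole argument is the base change that makes the matrices diagonal. It is routine, but one must check that after the change of complement the remaining maps $L' \to M_1$ and $M_1 \to N$ are still a conflation. I would verify this through the kernel/cokernel universal properties.

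\emph{The ``in particular'' statement.} Apply $\Hom_\CC(\ ,-)$ to the decomposed sequence. The summand $0\to I\to I\to 0\to 0$ contributes the exact complex $0 \to \Hom(\ ,I) \to \Hom(\ ,I) \to 0$, which has zero cokernel. The summand $0\to 0\to P\to P\to 0$ contributes $\Hom(\ ,P) \xrightarrow{=} \Hom(\ ,P)$, again with zero cokernel. Therefore
\[
F \cong \Coker\bigl(\Hom_\CC(\ ,M) \to \Hom_\CC(\ ,N)\bigr) \cong \Coker\bigl(\Hom_\CC(\ ,M') \to \Hom_\CC(\ ,N')\bigr),
\]
and $0 \to \Hom(\ ,L') \to \Hom(\ ,M') \to \Hom(\ ,N') \to F \to 0$ is exact. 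Since $\CC$ is Krull–Schmidt here ($R$ complete), we can first decompose $L$ and $N$ into indecomposables and collect all injective (resp.\ projective) summands into $I$ (resp.\ $P$). Applying the lemma then gives $L'$ with no injective summand and $N'$ with no projective summand.
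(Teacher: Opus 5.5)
The paper states this lemma without proof, so there is no argument of its own to compare against. Your proof is the standard one and it is correct.

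Your worry about a base change is unnecessary. If you take $M_1=\Ker g$ and the $I$-summand of $M$ to be $\Im(\alpha\iota)$, then $\alpha$ is automatically diagonal. Also $\beta$ vanishes on $\Im(\alpha\iota)$ because $\beta\alpha=0$, which you use only implicitly. Exactness of the remaining summand is immediate in $\CML$: there, conflations are short exact sequences of $\LL$-modules, $\Ker g$ is again MCM as a direct summand of $M$, and a direct summand of an exact complex is exact.
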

\section{Isolated singularity}\label{B}

In this section, we investigate the relationship between the Krull--Gabriel dimension of $\modsCML$ and the singularities of the base algebra $\LL$. 

Let us recall the definition of Krull--Gabriel dimension for an abelian category $\AA$. 

\begin{definition}\cite[Definition 2.1]{G85}\label{B1}
Let $\AA$ be an abelian category. 
Define $\AA_{-1} = 0$. 
For each $n \geq 0$, let $\AA _{n}$ be the subcategory of all objects of finite length in $\AA/\AA_{n-1}$. 
We define $\KGdim \AA = \min \{ n \mid \AA = \AA _n \}$ if such a minimum exists, and set $\KGdim \AA = \infty $ otherwise.  
\end{definition}

In the rest of this paper, we always assume that $(R, \m_R)$ is a commutative complete Cohen--Macaulay local ring with residue field $k$ and a canonical module $\omega _R$. 
Since $R$ is complete, $R$ admits a canonical module.  
Let $\LL$ be an $R$-order, that is, $\LL$ is an $R$-algebra which is a finitely generated maximal Cohen--Macaulay (abbr. MCM) $R$-module. 
We denote by $\modR$ and $\modL$ the categories of finitely generated $R$-modules and finitely generated $\LL$-modules, respectively, with $R$-homomorphisms and $\LL$-homomorphisms as morphisms.
We also denote by $\CML$ the full subcategory of $\modL$ consisting of MCM $R$-modules. 
That is, 
$$
\CML =  \{ M \in \modL \mid \text{$M$ is an MCM $R$-module} \} .
$$
Since $R$ is complete, $\modR$, and hence $\modL$ and $\CML$, are  Krull--Schmidt categories (cf. \cite[(1.18)]{Y}). 
We denote by $\sCML$ the stable category of $\CML$.  
The objects of $\sCML$ are the same as those of $\CML$, and the morphism sets are $\sHomL(M, N):=\HomL(M, N)/ \{ M \to P \to N \text{ where $P$ is projective} \}$. 
We recall that the categories $\CML$ and $\sCML$ are coherent categories (\cite[$\S$ 4]{Y}).

From now on, we focus on the functor category $\modsCML$. 
We first recall that $\modsCML$ can be identified with a subcategory of $\modCML$.

\begin{remark}\label{B2}
We have an equivalence of categories $\modsCML \cong  \{ F \in \modCML \mid F(P)=0 \text{ for all projective $\LL$-modules $P$} \} $ by mapping $F \mapsto F \circ \iota $, where $\iota : \CML \to \sCML$ (see \cite[Remark 2.6]{Y05} or \cite[Lemma 2.5]{E19}). 
Under this equivalence, every object $F\in \modsCML$ is represented by a short exact sequence $0\to L\to M\to N\to 0$ in $\CML$, in the sense that $F$ admits a presentation $0\to \HomL (\ ,L)\to \HomL (\ ,M)\to \HomL (\ ,N)\to F\to 0$. 
Equivalently, $F$ is described as a functor on the stable category by a presentation $\sHomL (\ , L) \to \sHomL (\ , M) \to \sHomL (\ , N) \to F \to 0$ (cf. \cite[Remark 4.16]{Y}).  
\end{remark}

We say that $\LL$ is of {\it finite} CM representation type if there is only a finite number of isomorphism classes of indecomposable MCM $\LL$-modules.

\begin{remark}\label{B12}
We consider $\modsCML$ rather than $\modCML$ for the following reason. 
In the commutative case, namely when $\LL=R$ is a complete Cohen--Macaulay local ring and $\CML=\CMR$, the first author proves that $\KGdim \modsCML=0$ if and only if $\LL$ has a finite CM representation type \cite[Theorem 2.12]{H24}.
This is an analogue of the corresponding result for finite dimensional algebras \cite{A74}. 
On the other hand, even if $\LL$ is of finite CM representation type, $\KGdim \modCML$ need not be equal to $0$ \cite[Remark 2.13]{H24}.
Thus $\modsCML$ is a more natural category for investigating the relationship between Krull--Gabriel dimension and CM representation type. 
\end{remark}

\begin{definition}
Let $\LL$ be an $R$-order. 
\begin{enumerate}[\rm(1)]
\item We say that $\LL$ is an isolated singularity if $\gldim \LL_{\p} = \dim R_{\p}$ for every non-maximal prime ideal $\p \in \Spec R$.    

\item We say that $\LL$ is a Gorenstein order if $\omega_{\LL} := \HomR (\LL , \omega_R)$ is a projective $\LL$-module. 
\end{enumerate}
\end{definition}

\begin{remark}
Note that an $R$-order $\LL$ is an isolated singularity if and only if $\sHom_{\LL} (M, N)$ is of finite length as an $R$-module for any $M,N$ in $\CML$.  
\end{remark}

\begin{remark}
Let $\LL$ be a Gorenstein $R$-order. 
\begin{enumerate}[\rm(1)]
\item A finitely generated $\LL$-module $M$ is MCM if and only if $\Ext^i _\LL ( M , \omega_{\LL})=0$ for all $i >0$. 
In particular, this induces the isomorphism $\sHom_{\LL} (M, N)\cong \Ext^1 _{\LL}(M, \Omega N)$ for $M$, $N$ in $\CML$. 

\item The $R$-order $\LL$ is a Gorenstein $R$-order if and only if $\CML$ is a Frobenius category. 
Equivalently, the projective and injective objects in $\CML$ coincide. 
\end{enumerate}
\end{remark}

Let us recall the theory of Auslander--Reiten (abbr. AR) sequences in $\CML$. 
For the details, we recommend that the reader refer to \cite{AR87, I08, LW12, Y}. 

\begin{theorem}\cite{AR87, Nak22}
Let $N \in \CML$ be a nonprojective indecomposable object.
Then the following are equivalent. 
\begin{enumerate}[\rm(1)]
\item There exists an AR sequence $0 \to L \to M \to N \to 0$ in $\CML$. 
\item $N_{\p}$ is $\LL_\p$-projective for all $\p$ in the punctured spectrum $\Spec R \setminus \{ \m_R \}$. 
\end{enumerate}
Let $L \in \CML$ be a noninjective indecomposable object.
Then the following are equivalent. 
\begin{enumerate}[\rm(1')]
\item There exists an AR sequence $0 \to L \to M \to N \to 0$ in $\CML$. 
\item $L_{\p}$ is injective in $\CC (\LL_{\p})$ for all $\p \in \Spec R \setminus \{\m_R \}$. 
\end{enumerate} 
\end{theorem}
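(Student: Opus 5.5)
The statement is the Auslander--Reiten existence theorem for $\CML$, and I would prove the two halves separately, getting the second pair (1')$\Leftrightarrow$(2') from the first by the duality $\HomR(\ ,\omega_R)$. That duality exchanges projective and injective objects of $\CML$ and preserves localization at primes.

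\emph{(1)$\Rightarrow$(2).} Let $\eta\colon 0\to L\to M\xrightarrow{p} N\to 0$ be an AR sequence. First I would show that every $\xi\in\Ext^1_\LL(N,Y)$, for every $Y\in\CML$, has the form $g_*\eta$ for some $g\colon L\to Y$.
\begin{enumerate}[\rm(i)]
\item If $\xi$ is split, take $g=0$.
\item If $\xi\colon 0\to Y\to E\xrightarrow{q} N\to 0$ is non-split, then $q$ is not a split epimorphism, so it factors through $p$ because $p$ is right almost split.
\item The resulting map $M\to E$ over $N$ restricts to some $g\colon L\to Y$ with $\xi=g_*\eta$.
\end{enumerate}
Next, $\eta$ is annihilated by $\rad\End_\LL(N)$, since for $r$ in the radical the map $p$ absorbs $r$. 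Because $N\neq 0$ is finitely generated, Nakayama gives $\m_R\cdot 1_N\subseteq\rad\End_\LL(N)$, so $\m_R\eta=0$. Consequently $\m_R\Ext^1_\LL(N,Y)=0$ for all $Y\in\CML$.

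Now take $Y=\Omega N$, the syzygy, which lies in $\CML$, with the sequence $0\to\Omega N\to P\to N\to 0$. For a non-maximal $\p$, localization gives $\Ext^1_{\LL_\p}(N_\p,(\Omega N)_\p)=\Ext^1_\LL(N,\Omega N)_\p=0$. Hence the localized syzygy sequence splits and $N_\p$ is $\LL_\p$-projective.

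\emph{(2)$\Rightarrow$(1).} Here I would use the AR translate $\tau N=\HomR(\Omega^d\mathrm{Tr}\,N,\omega_R)$ with $d=\dim R$, together with Auslander--Reiten duality
$$\Ext^1_\LL(N,\tau N)\cong D\,\sEnd_\LL(N),$$
where $D$ is Matlis duality. This duality is meaningful exactly because local projectivity of $N$ on the punctured spectrum makes $\sHomL(N,X)$ of finite length for all $X$. The steps are:
\begin{enumerate}[\rm(a)]
\item Since $\sEnd_\LL(N)$ is local of finite length, $D\sEnd_\LL(N)$ has simple socle as an $\End_\LL(N)$-module.
\item A nonzero socle element $\eta$ is killed by $\rad\End_\LL(N)$.
\item The standard argument, using that $\HomL(N,\ )$-defects of $\eta$ are detected via the duality, shows that the corresponding extension is almost split.
\end{enumerate}

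\emph{Main obstacle.} The hardest step is establishing the AR duality formula in this generality, for orders over a CM base with $\omega_R$. I would derive it from the local duality $\Ext^i_\LL(X,\omega_\LL)\cong\Ext^i_R(X,\omega_R)$ and the Auslander defect formula, applied to the finite-length stable Hom. The last point, that finite length is what makes Matlis duality available, is precisely where hypothesis (2) is used. For the detailed verification I would defer to \cite{AR87, Y}.
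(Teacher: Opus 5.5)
The paper does not prove this statement; it quotes it from \cite{AR87, Nak22}. Your (2)$\Rightarrow$(1) sketch also leaves the substance to the literature: the AR duality $\Ext^1_\LL(X,\tau N)\cong D\,\sHomL(N,X)$ natural in $X$, and making the left end indecomposable. That matches the paper's level of detail. The dual half via $\HomR(\ ,\omega_R)$ is fine, provided you read it as a duality $\CML\to\CC(\LL^{op})$ and apply (1)$\Leftrightarrow$(2) to the order $\LL^{op}$.

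\textbf{The gap is in (1)$\Rightarrow$(2), step (iii).} Right almost splitness of $p$ gives $s\colon E\to M$ with $ps=q$. This is a morphism of extensions $\xi\to\eta$ over $1_N$, and its restriction $h\colon Y\to L$ satisfies $\eta=h_*\xi$. You use the reverse: a map $M\to E$ over $N$, that is, $p$ factoring through $q$. Nothing provides this. By the long exact sequence,
\[
\{g_*\eta\mid g\in\HomL(L,Y)\}=\Ker\bigl(p^*\colon\Ext^1_\LL(N,Y)\to\Ext^1_\LL(M,Y)\bigr),
\]
so for $Y=\Omega N$ your claim would force $p$ to factor through a projective. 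The claimed consequence $\m_R\Ext^1_\LL(N,Y)=0$ is also false:
\begin{enumerate}[\rm(a)]
\item For $R=\LL=k[[t^2,t^{2m+1}]]$ with $m\ge 2$ and $N=k[[t]]$, one has $\Ext^1_R(N,\Omega N)\cong\sEnd_R(N)\cong k[[t]]/(t^{2m})$. Here $t^2\in\m_R$ acts nontrivially.
\item Already for $R=k[x]/(x^4)$ and $N=k[x]/(x^2)$, one gets $\Ext^1_R(N,N)\cong N$, which $x$ does not kill.
\end{enumerate}

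\textbf{How to repair it.} Use the correct direction together with Krull's intersection theorem.
\begin{enumerate}[\rm(i)]
\item Let $\sigma\colon 0\to\Omega N\to P\to N\to 0$, let $\p\neq\m_R$, pick $x\in\m_R\setminus\p$, and suppose $N_\p$ is not $\LL_\p$-projective.
\item Then $\sigma_\p\neq 0$. Since $x$ is a unit in $R_\p$, $x^n\sigma\neq 0$ for all $n$.
\item The correct direction gives $h_n\colon\Omega N\to L$ with
\[
\eta=(h_n)_*(x^n\sigma)=x^n\,(h_n)_*\sigma\in x^n\Ext^1_\LL(N,L)\quad\text{for all } n .
\]
\item Since $\Ext^1_\LL(N,L)$ is a finitely generated $R$-module, $\bigcap_n x^n\Ext^1_\LL(N,L)=0$. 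Hence $\eta=0$, contradicting that an AR sequence does not split.
\end{enumerate}
Your observation that $\m_R\eta=0$ is correct, but this argument does not need it.
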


We denote by $\PP_0$ the full subcategory of $\CML$ consisting of MCM $\LL$-modules $N$ such that $N_{\p}$ is $\LL_\p$-projective for all $\p \in \Spec R \setminus \{ \m_R \}$. 
We also denote by $\II_0$ the full subcategory of $\CML$ consisting of MCM $\LL$-modules $L$ such that $L_{\p}$ is injective in $\CC (\LL_{\p})$ for all $\p \in \Spec R \setminus \{\m_R \}$. 

\begin{remark}\label{B3}
Let $S$ be a simple functor in $\modCML$. 
Then there exists an AR sequence $0 \to L \to M \to N \to 0$ such that $0 \to \HomL (\   , L) \to \HomL (\   , M) \to \HomL (\   , N) \to S \to 0$. 
All simple objects of $\modCML$, hence in $\modsCML$,  are obtained in this way from AR sequences. 
See \cite[(4.12)]{Y} or \cite[Proposition 2.3]{E19}. 
\end{remark}

Let $F \in \modsCML$ admit a presentation $0 \to \HomL (\  , X) \to \HomL (\  , Y) \to \HomL (\   , Z) \to F \to 0$ and $S_N$ be a simple functor admitting a presentation $0 \to \HomL (\  , L) \to \HomL (\  , M) \to \HomL (\   , N) \to S_N \to 0$. 
Suppose that there is a monomorphism $S_N \to F$ in $\modsCML$. 
Then we have a commutative diagram: 
$$
\xymatrix{
0 \ar[r] & \HomL(\ ,X) \ar[r] & \HomL(\ ,Y) \ar[r] & \HomL(\ ,Z) \ar[r] & F \ar[r] & 0 \\
0 \ar[r] & \HomL(\ ,L) \ar[r] \ar[u] & \HomL(\ ,M) \ar[r] \ar[u] & \HomL(\ ,N) \ar[r] \ar[u] & S_N \ar[r] \ar[u] & 0
}
$$
By Yoneda's lemma, we obtain a commutative diagram:
$$
\begin{CD}
0 @>>>X @>>> Y @>>> Z @>>> 0 \\
@. @AA{h}A @AA{g}A @AA{f}A  @. \\
0 @>>> L @>>> M @>>> N @>>> 0.  
\end{CD}
$$

Similarly, suppose that there is an epimorphism $F \to S_N$ in $\modsCML$. 
By the dual argument above, we also obtain a commutative diagram:
$$
\begin{CD}
0 @>>>X @>>> Y @>>> Z @>>> 0 \\
@. @VV{u}V @VV{v}V @VV{w}V  @. \\
0 @>>> L @>>> M @>>> N @>>> 0.  
\end{CD}
$$

\begin{lemma}\label{B4}
\begin{enumerate}[\rm(1)]
\item A morphism $S_N \to F$ is nonzero if and only if $h$ is a split monomorphism. 

\item A morphism $F \to S_N$ is nonzero if and only if $w$ is a split epimorphism. 
\end{enumerate}
\end{lemma}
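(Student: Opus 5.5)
Both parts come down to the Yoneda lemma together with the defining property of the AR sequence $0 \to L \to M \to N \to 0$. We treat (1) in detail; (2) is the dual argument.

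For (1), recall the setup. A morphism $\phi: S_N \to F$ lifts to a morphism of projective presentations. By Yoneda this gives the commutative diagram of short exact sequences with vertical maps $h: L \to X$, $g: M \to Y$, $f: N \to Z$. The morphism $\phi$ is determined by $f$: it is the image of $\mathrm{id}_N \in S_N(N)$ under $\phi_N$, namely the class of $f$ in $F(N) = \Coker(\HomL(N,Y) \to \HomL(N,Z))$. Hence
\[
\phi = 0 \iff f \text{ factors through } Y \to Z.
\]
Since $S_N$ is simple, $\phi \neq 0$ is equivalent to $\phi$ being a monomorphism, though we will not need this.

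The plan is to show that $f$ factors through $Y \to Z$ if and only if $h$ factors through $L \to M$, by the standard homotopy argument for maps of short exact sequences. If $f = \beta s$ for some $s: N \to Y$, where $\beta: Y \to Z$, then $g - s\pi$ (with $\pi: M \to N$) maps into $X$ and gives $t: M \to X$ with $t\iota = h$ (with $\iota: L \to M$). Conversely, if $h = t\iota$, then $g - \alpha t$ (with $\alpha: X \to Y$) vanishes on $L$, so it factors through $\pi$ and yields $s$ with $\beta s = f$.

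It remains to show that $h$ factors through $\iota$ if and only if $h$ is not a split monomorphism.
\begin{itemize}
\item If $h$ is a split monomorphism, say $rh = \mathrm{id}_L$, and $h = t\iota$, then $\iota$ would split. This is impossible because the AR sequence is non-split.
\item If $h$ is not a split monomorphism, then, since $L$ is indecomposable, $h$ is not a split mono. The AR sequence is a left almost split sequence, so by the defining property of $\iota$, $h$ factors through $\iota$.
\end{itemize}
Combining the steps gives
\[
\phi \neq 0 \iff f \text{ does not factor through } \beta \iff h \text{ does not factor through } \iota \iff h \text{ is a split monomorphism}.
\]

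For (2), we dualize. A morphism $\psi: F \to S_N$ corresponds to a map of sequences $(u, v, w)$ from the presentation of $F$ to the AR sequence. Here $\psi$ is induced on $F = \Coker(\HomL(\ ,Y) \to \HomL(\ ,Z))$ by $\HomL(\ ,w)$ composed into $S_N$. So $\psi = 0$ exactly when the image of $\HomL(\ ,w)$ lands in $\Im \HomL(\ ,\pi)$, which by Yoneda means $w$ factors through $\pi$. Right almost splitness of $\pi$, together with the indecomposability of $N$, gives: $w$ factors through $\pi$ if and only if $w$ is not a split epimorphism. In this direction there is no need to pass to $u$, because the criterion is read off directly at the $Z$-term.

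The only subtle point is in (1), where one must pass from $f$ to $h$. The vanishing of $\phi$ is naturally a condition on $f$ at the right end, while almost-splitness is applied at the left end, so the homotopy argument above is needed. A second point needs care: the lift $(h,g,f)$ is only unique up to homotopy. This is harmless, because different lifts change $h$ by a map factoring through $\iota$, and such a change does not affect whether $h$ is a split monomorphism, since $\iota$ is left almost split. Finally, working in $\modsCML$ versus $\modCML$ changes nothing, by Remark \ref{B2}.
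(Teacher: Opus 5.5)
Your proof is correct. Part (2) is the paper's argument ($\psi=0$ iff $w$ factors through $\pi$, iff $w$ is not a split epimorphism). The forward half of (1) also matches: the paper likewise passes from a factorization $f=\beta k$ to $h=\ell\circ\iota$ and then uses that $\iota$ does not split.

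The real difference is the converse half of (1), namely that $h$ not split mono forces $S_N\to F$ to vanish.
\begin{itemize}
\item The paper embeds $F\hookrightarrow\Ext^1_\LL(\ ,X)$ and $S_N\hookrightarrow\Ext^1_\LL(\ ,L)$ through the long exact $\Ext$ sequences. It then notes that $h_*=\ell_*\circ\iota_*$ kills $S_N=\Ker\iota_*$.
\item You run the homotopy argument backwards. Since $g-\alpha t$ vanishes on $L$, it equals $s\pi$, and then $f=\beta s$. You finish with the Yoneda identification of the morphism with the class $[f]\in F(N)$.
\end{itemize}
Your version stays entirely within Hom functors and gives a single chain of equivalences. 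This also makes it transparent why the choice of lift $(h,g,f)$ does not matter. The paper's version is shorter once the $\Ext^1$ description of $F$ and the naturality of connecting maps are taken for granted.

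A small wording point: in your second bullet, the indecomposability of $L$ plays no role. The left almost split property alone gives the factorization.
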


\begin{proof}
(1) Suppose that the morphism $S_N \to F$ is zero. 
Then there exists a morphism $k: N \to Y$ that makes the triangle in the top right corner commutative: 
$$
\xymatrix{
0 \ar[r] & X \ar[r] & Y \ar[r] & Z \ar[r] & 0 \\
0 \ar[r] & L \ar[r]^i \ar[u]^h & M \ar[r] \ar[u]^g 
& N \ar[r] \ar[u]^f \ar[ul]^k & 0 .
}
$$
In particular, there exists a morphism $\ell : M \to X$ such that $h = \ell \circ i$.
Assume, for a contradiction, that $h$ is a split monomorphism. 
Then there exists a morphism $p : X \to L$ such that $p \circ h = 1_L$. 
Hence
$$
1_L = p \circ h = p \circ (\ell \circ i) = (p \circ \ell) \circ i.
$$
It follows that $i$ is a split monomorphism, which is a contradiction. 
Therefore $h$ is not a split monomorphism.

Conversely, suppose that $h$ is not a split monomorphism. 
Since $i$ is left almost split, there exists a morphism $\ell : M \to X$ such that $h = \ell \circ i$.
Applying $\Ext_\LL^1(\ , \ )$, we obtain the following commutative diagram:
$$
\xymatrix{
0 \ar[r] & F \ar[r] & \Ext_\LL^1(\ , X) \ar[r] & \Ext_\LL^1(\ , Y) \\
0 \ar[r] & S_N \ar[r] \ar[u] & \Ext_{\LL}^1(\ , L) \ar[r] \ar[u]^{h^\ast} & \Ext_\LL^1(\ , M) \ar[u]^{g^\ast} \ar[ul]^{\ell^\ast}.
}
$$
Thus the morphism $S_N \to F$ is zero. 

(2) Suppose that $w$ is not a split epimorphism. 
Then there exists a morphism $s: Z \to M$ that makes the triangle in the top right corner commutative: 
$$
\xymatrix{
0 \ar[r] & X \ar[r]\ar[d]^u  & Y \ar[r]\ar[d]^v& Z \ar[r]\ar[d]^w \ar[dl]_s & 0 \\
0 \ar[r] & L \ar[r] & M \ar[r]^j & N \ar[r]  & 0 .
}
$$
Applying $\HomL(\ , \ )$ to the above diagram, we have a commutative diagram:
$$
\xymatrix{
& \HomL(\ , Z) \ar[r]\ar[d]^{\HomL(\ , w)} \ar[dl]_{\HomL(\ , s)} & F\ar[r]\ar[d]^{\Psi}& 0 \\
 \HomL(\ , M) \ar[r]_{\smash{\lower2.3ex\hbox{$\scriptstyle\HomL(\ ,j)$}}}& \HomL(\ , N) \ar[r]  & S_N \ar[r]& 0.
}
$$
This diagram shows that $\Psi = 0$. 

Conversely, suppose that $w$ is a split epimorphism and assume, for a contradiction, that $\Psi : F \to S_N$ is zero. 
Since $\Psi(Z) =0$, there exists an $\End_{\LL} (Z)^{op}$-module homomorphism $\delta : \HomL (Z, Z) \to \HomL(Z, M)$ that makes the left triangle commutative: 
$$
\xymatrix{
& \HomL(Z , Z) \ar[r]\ar[d]^{\HomL(Z , w)}\ar[dl]_{\delta} & F(Z)\ar[r]\ar[d]^{\Psi(Z)=0}& 0 \\
 \HomL(Z, M) \ar[r]_{\smash{\lower2.0ex\hbox{$\scriptstyle\HomL(Z, j)$}}}& \HomL(Z , N) \ar[r]  & S_N(Z) \ar[r]& 0.
}
$$
Set $\ell = \delta (1_Z)$, and then we have $w= j \circ \ell$. 
Since $w$ is a split epimorphism, there exists $r: N \to Z$ such that $w \circ r = 1_N$. 
Hence,
$$
1_N = (j \circ \ell) \circ r = j \circ (\ell \circ r). 
$$
This is a contradiction since $j$ is not a split epimorphism. 
Therefore, $\Psi$ is nonzero. 
\end{proof}

In particular, $F$ has no simple subfunctors (resp. quotient functors) if and only if $X$ (resp. $Z$) has no indecomposable direct summand belonging to $\II_0$ (resp. $\PP_0$). 

For a short exact sequence $0\to X \xrightarrow{f} Y \xrightarrow{g} Z \to 0$ in $\CML$, we say that this sequence is minimal if it has no nonzero split exact direct summand. 
Equivalently, it has no direct summand of the form $0 \to 0 \to W \xrightarrow{1_W} W \to 0$ or $0 \to W' \xrightarrow{1_{W'}} W' \to 0 \to 0$.  
Since $\modR$ is a Krull--Schmidt category, every short exact sequence of finitely generated modules can be made minimal. 
It is well known that, in a Krull--Schmidt category, a short exact sequence is minimal if and only if $f$ is left minimal and $g$ is right minimal in the sense of \cite[$\S$1]{ARS}.
Moreover, we may also assume that $X$ (resp. $Z$) does not have injective (resp. projective) direct summands by Lemma \ref{A10}. 
Let $F \in \modsCML$ admit a presentation $0 \to \HomL (\  , X) \to \HomL (\  , Y) \to \HomL (\   , Z) \to F \to 0$. 
We say that $F \in \modsCML$ is minimal if the short exact sequence $0 \to X \to Y \to Z \to 0$ is minimal in the above sense.

\begin{proposition}\label{B5}
Let $F \in \modsCML$ be minimal with a presentation $0 \to \HomL (\  , X) \to \HomL (\  , Y) \to \HomL (\   , Z) \to F \to 0$. 
The number of isomorphism classes of simple subfunctors of $F$ is equal to the number of isomorphism classes of indecomposable direct summands of $X$ belonging to $\II_0$. 
Dually, the number of isomorphism classes of simple quotient functors of $F$ is equal to the number of isomorphism classes of indecomposable direct summands of $Z$ belonging to $\PP_0$. 
\end{proposition}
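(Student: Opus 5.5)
The plan is to match simple subfunctors of $F$ with indecomposable direct summands of $X$ lying in $\II_0$ through Auslander--Reiten sequences, using Lemma \ref{B4}(1) to detect nonzero morphisms and Remark \ref{B3} to describe every simple functor. The quotient statement is then obtained by the dual argument, using Lemma \ref{B4}(2) and right almost split maps.

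\textbf{From simple subfunctors to summands.} Let $S$ be a simple subfunctor of $F$. By Remark \ref{B3}, $S\cong S_N$ for an AR sequence $0\to L\xrightarrow{i} M\to N\to 0$ with $L$ indecomposable. As explained before Lemma \ref{B4}, the monomorphism $S_N\to F$ lifts to a morphism of short exact sequences with components $h:L\to X$, $g$, $f$. Lemma \ref{B4}(1) says $h$ is a split monomorphism, so $L$ is an indecomposable direct summand of $X$. Since $L$ is the left end term of an AR sequence, the AR existence theorem puts $L$ in $\II_0$.

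\textbf{From summands to simple subfunctors.} Let $L$ be an indecomposable direct summand of $X$ with $L\in\II_0$, and let $h:L\to X$ be the split inclusion. By Lemma \ref{A10} and minimality, $X$ has no injective summands, so $L$ is noninjective and an AR sequence $0\to L\xrightarrow{i} M\to N\to 0$ exists. Write $\alpha:X\to Y$ for the first map of the presentation of $F$. I claim $\alpha h$ is not a split monomorphism. Otherwise, with $r\alpha h=1_L$, the summand $L$ of $X$ would split off the sequence as a direct summand $0\to L\xrightarrow{1}L\to 0\to 0$, contradicting minimality (equivalently, left minimality of $\alpha$). Since $i$ is left almost split, $\alpha h$ factors through $i$, giving $g:M\to Y$ with $gi=\alpha h$. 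This induces $f:N\to Z$ on cokernels, hence a morphism of sequences. By Yoneda it yields a morphism $S_N\to F$, which is nonzero by Lemma \ref{B4}(1) because $h$ is split mono. Since $S_N$ is simple, this morphism is a monomorphism.

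\textbf{Counting isomorphism classes.} Because the AR sequence is unique up to isomorphism once either end term is fixed, $S_N\cong S_{N'}$ holds if and only if $N\cong N'$, if and only if $L\cong L'$. So the two constructions above give mutually inverse bijections between isomorphism classes of simple subfunctors and isomorphism classes of indecomposable summands of $X$ in $\II_0$.

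The main obstacle is the existence direction, namely constructing a morphism of sequences whose first component is exactly the split inclusion $h$. This is where minimality is essential: it rules out $\alpha h$ being split, which is what allows the left almost split property of $i$ to be applied.
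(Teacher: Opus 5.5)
Your proposal is correct and follows the same route as the paper, namely Lemma \ref{B4} combined with the AR-sequence description of simple functors in Remark \ref{B3}. The paper compresses this into the single assertion that $S_N\to F$ is nonzero if and only if $L$ is a summand of $X$. Your extra step supplies the detail needed for the ``if'' direction of that assertion: you use minimality to see that $\alpha h$ is not a split monomorphism, so the left almost split map $i$ gives a morphism of sequences whose first component is the split inclusion.
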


\begin{proof}
We prove the initial statement. 
Let $S_N$ be a simple functor which corresponds to $0 \to L \to M \to N \to 0$. 
By Lemma \ref{B4}, the morphism $S_N \to F$ is nonzero if and only if $L$ is a direct summand of $X$. 
Since such an object $L$ belongs to $\II_0$, the assertion follows.
\end{proof}

For simplicity, we denote $\AA = \modsCML$ and let $\AA_0$ be the full subcategory consisting of finite length functors. 
Note that $\AA_0$ is a Serre subcategory. 
It is worth noting that since $\AA$ and $\AA / \AA_0$ are $R$-linear categories, their Hom sets naturally have $R$-module structure.

\begin{lemma}\label{B7}
Suppose that $\overline{S} \in \AA/ \AA_0$ is simple and that $S$ has a presentation $\sHomL(\ ,  X) \to S \to 0$ in $\AA$. 
Then we may assume that $X$ is indecomposable in $\sCML$. 
\end{lemma}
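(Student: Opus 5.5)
We decompose $X$ in $\sCML$ and then use the simplicity of $\overline{S}$ to reduce to a single summand. Since $\CML$ is Krull--Schmidt, write $X \cong X_1 \oplus \cdots \oplus X_n$ with each $X_i$ indecomposable and non-projective; projective summands vanish in $\sCML$ and may be dropped. Then $\sHomL(\ ,X) \cong \bigoplus_i \sHomL(\ ,X_i)$. Let $\pi : \sHomL(\ ,X) \to S$ be the given epimorphism and $\pi_i$ its restriction to the $i$-th summand. Set $S_i = \Im \pi_i \subseteq S$. Each $S_i$ is finitely presented, being the image of a morphism in the abelian category $\AA$, and $S = S_1 + \cdots + S_n$.

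Pass to $\AA/\AA_0$, where the quotient functor is exact. There $\overline{S} = \sum_i \overline{S_i}$, and each $\overline{S_i}$ is a subobject of $\overline{S}$, so it is either $0$ or $\overline{S}$. If every $\overline{S_i}$ were $0$, then each $S_i$ would lie in $\AA_0$. Since $\AA_0$ is a Serre subcategory, closed under finite sums (a quotient of a direct sum), $S$ would lie in $\AA_0$ as well, contradicting $\overline{S} \ne 0$. Hence some $\overline{S_i} = \overline{S}$. Equivalently, the inclusion $S_i \hookrightarrow S$ is a pseudo-isomorphism, since its cokernel $S/S_i$ lies in $\AA_0$ by Remark \ref{A3}. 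Thus $\overline{S_i} \cong \overline{S}$ in $\AA/\AA_0$, and $S_i$ admits an epimorphism $\sHomL(\ ,X_i) \to S_i \to 0$ with $X_i$ indecomposable in $\sCML$.

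The meaning of ``we may assume'' is that we replace $S$ by $S_i$; this is legitimate because only the isomorphism class of $\overline{S}$ in $\AA/\AA_0$ matters. The one point needing care is checking that $\overline{S_i}$ really is a subobject of $\overline{S}$ and that the finite sum $\sum_i \overline{S_i}$ equals $\overline{S}$. Both follow from exactness of $\AA \to \AA/\AA_0$ applied to the epimorphism $\bigoplus_i S_i \to S$ and to the monomorphisms $S_i \to S$. No serious obstacle is expected.
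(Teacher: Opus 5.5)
Your proposal is correct and is essentially the paper's argument: the paper splits $\sHomL(\ ,X_1\oplus X_2)\cong \sHomL(\ ,X_1)\oplus\sHomL(\ ,X_2)$ and observes that the image of one summand already maps onto the simple object $\overline{S}$, which is what you do for $n$ summands. You also supply the details the paper leaves implicit, namely why some $\overline{S_i}$ is nonzero and why $S_i\hookrightarrow S$ is a pseudo-isomorphism, so that $S$ may be replaced by $S_i$.
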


\begin{proof}
Suppose that $\sHomL (\ ,  X_1 \oplus X_2) \to S \to 0$. 
Since $\sHomL (\ ,  X_1 \oplus X_2) \cong \sHomL (\ ,  X_1) \oplus \sHomL (\ ,  X_2)$, one of the images maps onto $\overline{S}$. 
\end{proof}

\begin{proposition}\label{B8}
Suppose that $\overline{S} \in \AA/ \AA_0$ is simple and that $S$ has a presentation $\sHomL(\ ,  X) \to S \to 0$ in $\AA$ where $X$ is indecomposable. 
\begin{enumerate}[\rm(1)]
\item If $X \in \PP_0$, then $\End_{\AA/\AA_0} (\overline{S})$ is an $R$-algebra of finite length. 
In particular, $\m_R ^n \overline{S} = 0$ for $n \gg 0$.  
\item If $X \not\in \PP_0$, then $\Hom_{\AA/ \AA_0} (\overline{S}, \overline{G})$ is a finitely generated $R$-module for all $G \in \AA$. 
\end{enumerate}
\end{proposition}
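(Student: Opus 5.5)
Both parts will be read off the description of $\AA/\AA_0$ inside $\ModsCML$ from Theorem \ref{A8} and Lemma \ref{A6}. We have
$$\Hom_{\AA/\AA_0}(\overline{S},\overline{G}) \cong \Hom_{\ModsCML}(S,\widetilde{G}) \hookrightarrow \Hom(\sHomL(\ ,X),\widetilde{G}) \cong \widetilde{G}(X) = \colim_{P_G} G'(X).$$
Since $S$ is a quotient of $\sHomL(\ ,X)$, the middle map is injective. So every question about $\Hom$ from $\overline{S}$ reduces to understanding the values $G'(X)$ along pseudo-isomorphisms $G\to G'$.

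For (1), assume $X\in\PP_0$. Then for every $p$ in the punctured spectrum, $X_\p$ is projective over $\LL_\p$. Hence $\sHomL(X,N)_\p=0$ for all $N\in\CML$, so $\sHomL(X,N)$ has finite length as an $R$-module. For any $G'\in\AA$, with presentation $\sHomL(\ ,Z)\to G'\to 0$, the value $G'(X)$ is a quotient of $\sHomL(X,Z)$ and therefore also has finite length. In particular $\End_{\AA}(S)\subseteq S(X)$ has finite length. To pass to $\End_{\AA/\AA_0}(\overline S)$, I would replace $S$ by $S/\gamma(S)$, the quotient by its largest finite-length subfunctor (as in Proposition \ref{A5}; $\gamma$ exists since $S$ is noetherian). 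This new functor is still a quotient of $\sHomL(\ ,X)$ and lies in $\AA_0^{\perp}$.

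The remaining issue is to bound the colimit $\widetilde{S}(X)=\colim S'(X)$. Each $S'(X)$ has finite length, but the colimit could in principle grow. Here I would use simplicity of $\overline S$. Since $\End_{\AA/\AA_0}(\overline S)$ is a division ring and an $R$-algebra, some $r\in\m_R$ acting on $\overline{S}$ is either invertible or zero. Because $\m_R^n S(X)=0$ for large $n$, multiplication by $r^n$ is zero on $S$ itself, so $r$ acts nilpotently on $\overline S$ and hence as zero. Thus $\m_R\overline S=0$ and the endomorphism ring is a division algebra over $k$. Its finite length then follows from the injection $\End(\overline S)\hookrightarrow \widetilde S(X)$, provided a $k$-dimension bound holds. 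I would obtain that bound by noting that a morphism $\overline S\to\overline S$ is determined by the image of the generator in $S'(X)$ for a single fixed $S'$, namely one computed via the $\AA_0^\perp$ replacement and Proposition \ref{A2}(1). This gives $\End_{\AA/\AA_0}(\overline S)\cong \End_\AA(S/\gamma S)$, which has finite length. I expect this identification of the colimit to be the main obstacle; I would try to verify that $S/\gamma S$ also lies in ${}^{\perp}\AA_0$ using simplicity, since any nonzero map to a finite-length functor would be an epimorphism in $\AA/\AA_0$ onto zero.

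For (2), assume $X\notin\PP_0$. I would again use $\Hom(\overline S,\overline G)\hookrightarrow \widetilde{G}(X)$, replacing $G$ by $G/\gamma(G)\in\AA_0^\perp$. The aim is to show that the colimit stabilizes, i.e.\ $\widetilde G(X)=G(X)$ up to a finitely generated piece. A pseudo-isomorphism $G\to G'$ with $G\in\AA_0^\perp$ is a monomorphism with finite-length cokernel $C$. Simple functors are $S_N$ with $N\in\PP_0$, and $S_N(X)=0$ unless $X\cong N$; since $X$ is indecomposable and not in $\PP_0$, every finite-length $C$ satisfies $C(X)=0$. Therefore $G(X)\to G'(X)$ is an isomorphism, $\widetilde G(X)=G(X)$, and this is a quotient of some $\sHomL(X,Z)$, hence a finitely generated $R$-module. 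The $R$-submodule $\Hom(\overline S,\overline G)$ is then finitely generated because $R$ is noetherian.
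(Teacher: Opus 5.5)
Your treatment of (2) is the paper's argument: every finite-length functor vanishes at $X$, because each simple is some $S_N$ with $N\in\PP_0$ and $X\not\cong N$. Hence $\Hom_{\AA/\AA_0}(\overline{\sHomL(\ ,X)},\overline{G})\cong G(X)$, and $\Hom_{\AA/\AA_0}(\overline{S},\overline{G})$ embeds in this finitely generated module. The passage to $G/\gamma(G)$ is superfluous, and the existence of $\gamma$ is not justified. For any pseudo-isomorphism $G\to G'$, both kernel and cokernel vanish at $X$, so $G(X)\cong G'(X)$ directly. Your proof that $\m_R^n\overline{S}=0$ (indeed $\m_R\overline{S}=0$, via nilpotents in a division ring) is also correct. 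It is essentially the paper's argument: $\m_R^n\sEnd_\LL(X)=0$ forces $\m_R^n\sHomL(\ ,X)=0$, hence $\m_R^nS=0$.

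The gap is in the finite-length claim for $\End_{\AA/\AA_0}(\overline{S})$. Your $k$-dimension bound rests on three steps: replace $S$ by $S/\gamma(S)\in\AA_0^{\perp}$, show it lies in ${}^{\perp}\!\AA_0$, and apply Proposition~\ref{A2}(1). This cannot work in the isolated singularity case. That case is the standing assumption of Section~\ref{C}, and there (1) covers every simple $\overline{S}$, since every indecomposable lies in $\PP_0$.

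\begin{itemize}
\item By Proposition~\ref{B5}, every nonzero $F\in\AA$ then has a nonzero socle and a nonzero top (Lemma~\ref{C4}(1)). So $\AA_0^{\perp}\cap\AA=0={}^{\perp}\!\AA_0\cap\AA$.
\item Consequently a largest finite-length subfunctor of $S$ exists only if $S\in\AA_0$. Simplicity of $\overline{S}$ excludes this.
\item $S$ is not noetherian: its socle series is strictly increasing (Lemma~\ref{C4}(3)).
\item Your heuristic for ${}^{\perp}\!\AA_0$ yields no contradiction, because a morphism to a finite-length object is simply zero in $\AA/\AA_0$.
\end{itemize}

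So $\widetilde{S}(X)=\colim_{P_S}S'(X)$ is a genuine colimit of finite-length modules, and nothing in your argument bounds it. Note that the paper's own proof of (1) stops at $\m_R^n\overline{S}=0$ and gives no separate argument for finite length of the endomorphism ring. Only the annihilation statement is used later (Lemma~\ref{B9}, Theorem~\ref{B10}). What you prove thus coincides with what the paper actually proves. Your additional step toward finite length fails, and would need an entirely different idea.
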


\begin{proof}
(1) Suppose that $X$ belongs to $\PP_0$. 
Then $\sHomL (X, X)$ is an $R$-module of finite length, so that $\m_R ^n \sHomL (X, X) = 0$ for some $n\gg 0$. 
Hence, $\m_R ^n \sHomL(\ , X)=0$. 
This implies that $\m_R ^n$ annihilates $S$, and hence $ \m_R ^n \overline{S} = 0$.

(2) Suppose that $X$ does not belong to $\PP_0$. 
By Proposition \ref{B5}, $T(X) = 0$ for all simple functors $T \in \AA$. 
Thus $L(X) = 0$ for every functor $L \in \AA_0$ by Proposition \ref{B5}. 
In particular, $G(X) \cong G'(X)$ if $G$, $G'$ are pseudo-isomorphic for $G, G' \in \AA$. 
Combining this with Yoneda's lemma,   
$$
\Hom_{\AA / \AA_0} (\overline{\sHomL (\ ,  X)}, \overline{G}) = \colim_{P_G \ni (G \to G')} \Hom_{\AA} ({\sHomL (\ ,  X)}, G') \cong G(X). 
$$
Since we have $\overline{\sHomL (\ ,  X)} \to \overline{S} \to 0$ in $\AA / \AA_0$, we see that $\Hom_{\AA / \AA_0} (\overline{S}, \overline{G})$ is an $R$-submodule of $\Hom_{\AA / \AA_0} (\overline{\sHomL (\ ,  X)}, \overline{G}) \cong G(X)$. 
Since $G(X)$ is a finitely generated $R$-module, $\Hom_{\AA / \AA_0} (\overline{S}, \overline{G})$ is also finitely generated as an $R$-module. 
\end{proof}

The following is a crucial lemma in this section. 

\begin{lemma}\label{B9}
Let $\overline{S} \in \AA / \AA_0$ be a simple functor. 
Then $\End _{\AA / \AA_0} (\overline{S})$ is a division $k$-algebra, where $k=R/\m_R$. 
In particular, $\m_R \overline{S} = 0$.  
\end{lemma}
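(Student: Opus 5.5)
The plan is to combine Schur's lemma with Proposition \ref{B8}, splitting into the two cases $X\in\PP_0$ and $X\notin\PP_0$.

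\emph{Setting up.} Since $\overline{S}$ is simple in the abelian category $\AA/\AA_0$, Schur's lemma gives that $D:=\End_{\AA/\AA_0}(\overline{S})$ is a division ring. Because $\AA/\AA_0$ is $R$-linear, there is a ring homomorphism $\rho: R\to D$, $r\mapsto r\cdot 1_{\overline{S}}$, with image in the center of $D$. It therefore suffices to show $\rho(\m_R)=0$. Then $\rho$ factors through $k=R/\m_R$, so $D$ is a division $k$-algebra. Moreover, for $r\in\m_R$, multiplication by $r$ on $\overline{S}$ is $\rho(r)=0$, which gives $\m_R\overline{S}=0$. Since $D$ is a division ring, each $\rho(r)$ is either $0$ or invertible. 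So it is enough to rule out that $\rho(r)$ is invertible for some $r\in\m_R$.

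\emph{Reduction.} We may replace $S$ by any object of $\AA$ with the same image in $\AA/\AA_0$. Since $S$ is finitely presented, there is an epimorphism $\sHomL(\ ,X)\to S$. By Lemma \ref{B7}, we may assume $X$ is indecomposable, replacing $S$ by the image of one summand if necessary; this image is a subobject of $S$ with $\overline{\text{image}}\cong\overline{S}$. This puts us in the situation of Proposition \ref{B8}.

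\emph{Case $X\in\PP_0$.} By Proposition \ref{B8}(1), $\m_R^n\overline{S}=0$ for $n\gg0$. Hence $\rho(r)^n=\rho(r^n)=0$ for $r\in\m_R$. A nilpotent element of a division ring is zero, so $\rho(r)=0$.

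\emph{Case $X\notin\PP_0$.} By Proposition \ref{B8}(2), applied with $G=S$, $D$ is a finitely generated $R$-module, and $D\neq0$ since $\overline{S}\neq 0$. Suppose $\rho(r)$ were invertible for some $r\in\m_R$. Then $D=\rho(r)D=rD\subseteq\m_R D$, so Nakayama's lemma forces $D=0$, a contradiction. Hence $\rho(\m_R)=0$ in this case as well.

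The only delicate point is the reduction step. We must make sure that passing from $S$ to the image of $\sHomL(\ ,X)$ preserves $\overline{S}$ up to isomorphism, so that $D$ is unchanged. This holds because in $\AA/\AA_0$ the image of the nonzero map $\overline{\sHomL(\ ,X)}\to\overline{S}$ is a nonzero subobject of a simple object, hence equal to $\overline{S}$. Once this is in place, both cases follow directly from Proposition \ref{B8}.
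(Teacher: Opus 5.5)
Your proof is correct and follows the paper's argument. Both use Schur's lemma, reduce to an indecomposable $X$ via Lemma \ref{B7}, and split into the cases $X\in\PP_0$ versus $X\notin\PP_0$ via Proposition \ref{B8}. The only difference is in the final step: the paper forms the annihilator prime $\p$ of $\overline{S}$, embeds $\kappa(\p)$ into $\End_{\AA/\AA_0}(\overline{S})$, and uses the Atiyah--Macdonald integrality argument in the second case, whereas you argue elementwise (each $\rho(r)$ is $0$ or a unit) with nilpotence and Nakayama's lemma, which is slightly more direct; you also justify the reduction step more carefully than the paper does.
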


\begin{proof}
By Lemma \ref{B7}, we may take a presentation $\sHomL (\ ,X) \to S\to 0$ with $X$ indecomposable.
Recall that $\End_{\AA / \AA_0} (\overline{S})$ is a division ring since $\overline{S}$ is simple in $\AA / \AA_0$. 
Let $\p = \{ a \in R \mid a\overline{S}=0\}$. 
Here, $a\overline{S}=0$ means the natural transformation $\overline{S} \to \overline{S}$ induced by $a$ is zero.  
Then $\p$ is a prime ideal. 
Moreover, for any $a \in R \setminus \p$, multiplication by $a$ induces an automorphism of $\overline{S}$. 
Consequently, we have a canonical embedding $\kappa (\p )= R_{\p}/\p R_{\p} \to \End_{\AA / \AA_0} (\overline{S})$.

Suppose that $X \in \PP_0$. 
Since $\m_R ^n \overline{S}=0$, we have $\m_R ^n \kappa (\p )=0$. 
Therefore, $\p = \m_R$.
Suppose that $X \not\in \PP_0$. 
Then $\End_{\AA / \AA_0} (\overline{S})$ is a finitely generated $R$-module. 
Hence, $\kappa (\p )$ is also a finitely generated $R$-module. 
Note that $\kappa (\p )$ is finitely generated as an $R/ \p$-module. 
Since $R/ \p$ is an integral domain, one shows that $R/ \p$ is a field. 
See \cite[Proposition 5.7]{AM}. 
Consequently, $\p =\m_R$.  
Therefore, $\m_R \overline{S} = \p \overline{S} = 0$. 
\end{proof}

We now state the main theorem in this section.

\begin{theorem}\label{B10}
Suppose that $\LL$ is a Gorenstein $R$-order. 
If $\KGdim \AA \le 1$, then $\LL$ is an isolated singularity. 
\end{theorem}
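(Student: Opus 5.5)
We argue by contradiction. Assume $\KGdim \AA \le 1$ but $\LL$ is not an isolated singularity. By the remark characterizing isolated singularities, there are $M,N \in \CML$ with $\sHomL(M,N)$ of infinite length. After splitting into indecomposables we may take $M=X$ indecomposable and nonprojective with $\sHomL(X,N)$ of infinite length. Such an $X$ cannot lie in $\PP_0$, because every object of $\PP_0$ has finite length stable Hom sets into any MCM module.

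Consider the representable functor $F=\sHomL(\ ,X)\in\AA$. Since $\KGdim\AA\le 1$, the image $\overline{F}$ in $\AA/\AA_0$ has finite length, so there is a finite filtration of $\overline F$ whose factors are simple objects $\overline{S_1},\dots,\overline{S_r}$. Lemma \ref{B9} gives $\m_R\overline{S_i}=0$ for every $i$. Consequently $\m_R^r\overline F=0$ in $\AA/\AA_0$, i.e.\ multiplication by any $a\in\m_R^r$ on $F$ becomes zero in the quotient category.

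Next we pass this back to $\AA$. Since $X\notin\PP_0$, the argument in the proof of Proposition \ref{B8}(2) shows that $L(X)=0$ for every $L\in\AA_0$. It also gives
\[
\Hom_{\AA/\AA_0}(\overline{\sHomL(\ ,X)},\overline G)\cong G(X)\quad\text{for all } G\in\AA .
\]
Taking $G=F$, we get $\End_{\AA/\AA_0}(\overline F)\cong F(X)=\sEnd_\LL(X)$ as $R$-modules. Hence $\m_R^r\sEnd_\LL(X)=0$, so $\sEnd_\LL(X)$ has finite length.

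The Gorenstein hypothesis turns this into finiteness for all Homs out of $X$. For a Gorenstein order, $\sEnd_\LL(X)$ of finite length forces $X_\p$ to be stably zero, hence projective, for every nonmaximal $\p$. This holds because localization commutes with the stable Hom; if $\sEnd_{\LL_\p}(X_\p)=0$ then the identity of $X_\p$ factors through a projective, so $X_\p$ is projective. Thus $X\in\PP_0$, which contradicts the choice of $X$.

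Alternatively, one can avoid that step. Apply the same $\Hom$-computation with $G=\sHomL(\ ,N)$, noting that $\m_R^r$ kills $\overline{F}$ and therefore kills $\Hom_{\AA/\AA_0}(\overline F,\overline G)\cong\sHomL(X,N)$. This directly contradicts the infinite length of $\sHomL(X,N)$. This variant seems cleaner, and I would use it as the main line.

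The main obstacle is the step that assembles the $\m_R$-annihilation of the simple composition factors into annihilation of $\overline F$ and then of the Hom sets, with enough care to check that multiplication by $a\in R$ is compatible with the equivalence in Proposition \ref{B8}(2). It also remains to check where the Gorenstein hypothesis is genuinely used. I expect it enters through the Frobenius property, which identifies the relevant $\PP_0$ with $\II_0$ so that the simple functors and the AR theory of Remark \ref{B3} and Proposition \ref{B5} behave symmetrically; that symmetry is what guarantees $L(X)=0$ for all finite length $L$. If that check fails for $X\notin\PP_0$, I would instead apply the dual argument to $\HomL$ into $\Omega^{-1}$-shifts, using $\sHomL(M,N)\cong\Ext^1_\LL(M,\Omega N)$.
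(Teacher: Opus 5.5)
Your proposal is correct, but its key step differs from the paper's.

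\textbf{The paper's route.} The paper takes an indecomposable $M\notin\PP_0$ and uses the Gorenstein hypothesis, namely $\PP_0=\II_0$ and $\Omega$ being an autoequivalence of $\sCML$, to get $\Omega M\notin\II_0$. By Proposition \ref{B5}, the functor $F=\sHomL(\ ,M)$ then has neither simple subfunctors nor simple quotients, so $F\in{}^{\perp}\!\AA_0\cap\AA_0^{\perp}$. Proposition \ref{A2}(1) then gives $\End_{\AA/\AA_0}(\overline F)\cong\End_{\AA}(F)\cong\sHomL(M,M)$. Lemma \ref{B9} and localization of $\Ext^1_\LL(M,\Omega M)$ finish the proof.

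\textbf{Your route.} You instead use the evaluation isomorphism $\Hom_{\AA/\AA_0}(\overline{\sHomL(\ ,X)},\overline G)\cong G(X)$ from the proof of Proposition \ref{B8}(2). This needs only that every finite length functor vanishes at $X$. It does not care whether $\sHomL(\ ,X)$ has nonzero finite length subfunctors, which is exactly what membership in $\AA_0^{\perp}$ excludes in the paper's argument.

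\textbf{The Gorenstein hypothesis is not used.} The vanishing $L(X)=0$ for $L\in\AA_0$ holds for every order. By Remark \ref{B3}, a simple functor is the top of $\HomL(\ ,N)$ for an indecomposable nonprojective $N\in\PP_0$, so it vanishes on every indecomposable not isomorphic to $N$. Consequently:
\begin{itemize}
\item Your guess that the Frobenius symmetry is what guarantees $L(X)=0$ is mistaken.
\item The qualifier ``for a Gorenstein order'' in your first variant is unnecessary; your localization argument there is general.
\item The fallback via $\Omega^{-1}$-shifts is not needed.
\end{itemize}
Lemma \ref{B9} is also proved without Gorenstein. So your argument never uses the hypothesis and in fact proves the statement for arbitrary $R$-orders. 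The paper's proof depends on Gorenstein only because it goes through Proposition \ref{A2}(1). Your second variant, testing against $G=\sHomL(\ ,N)$, also avoids the localization step.

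\textbf{The $R$-linearity check you flagged is routine.} The isomorphism is induced by the quotient functor $\AA\to\AA/\AA_0$, which is $R$-linear. Moreover, for $a\in\m_R^r$ one has $a\phi=\phi\circ(a\cdot 1_{\overline F})=0$ for every $\phi\in\Hom_{\AA/\AA_0}(\overline F,\overline G)$.
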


\begin{proof}
Assume that $\LL$ is not an isolated singularity. 
Then there exists an indecomposable $M \in \CML$ which is not projective on the punctured spectrum.  
We consider the functor $F$ which corresponds to the exact sequence $0 \to \Omega M \to P \to M \to 0$, where $P$ is a projective $\LL$-module. 
That is, $F \cong \sHomL (\ ,  M)$. 
We may assume that $M$(resp. $\Omega M$) has no projective (resp. injective) summands. 
Since $\LL$ is Gorenstein, we have $F  \cong  \Ext ^1 _\LL (\ ,  \Omega M)$. 
Moreover $\Omega M$ also does not belong to $\II_0$. 
Note that $\PP_0 = \II_0$ since $\LL$ is Gorenstein. 
Since $\Omega$ gives an autofunctor on $\sCML$, $\Omega M$ is indecomposable. 
Thus, if $\Omega M$ belongs to $\PP_0 = \II_0$, then so does $M$. 
This contradicts that $M \not\in \PP_0$. 
Hence, by Proposition \ref{B5}, $F$ belongs to $\AA_0 ^{\perp} \cap {}^{\perp}\! A_0$. 
Thus $\End _{\AA / \AA_0} (\overline{F}) \cong \End_\AA (F) \cong \sHomL (M, M)$. 
The last isomorphism follows from Yoneda's lemma. 
This shows that  $\End _{\AA / \AA_0} (\overline{F})$ is finitely generated as an $R$-module. 

Since $\KGdim \AA \le 1$, $\overline{F}$ is of finite length in $\AA / \AA_0$. 
There is a filtration
$$
0 = \overline{F}_0 \subset \overline{F}_1 \subset \overline{F}_2 \subset \cdots \subset \overline{F}_{n-1} \subset \overline{F}_n= \overline{F} 
$$
such that $\overline{S}_i := \overline{F}_i/\overline{F}_{i-1}$ is simple for each $i$. 
By Lemma \ref{B9}, we have $\m_R ^n \overline{F}=0$, and hence $\m_R ^n \sHomL (M, M) =0$. 
This implies that the length of $\sHomL (M, M)\cong \Ext ^1 _\LL (M, \Omega M)$ is finite as an $R$-module.
Hence, $$\Ext ^1 _\LL (M, \Omega M)_\p \cong \Ext ^1 _{\LL_\p} (M_\p, (\Omega M )_\p) =0$$ for all $\p \in \Spec R \setminus \{\m_R \}$. 
This shows that $M$ is projective on the punctured spectrum, which is a contradiction.
\end{proof}

\begin{remark}\label{B11}
As mentioned in Remark \ref{B12}, the first author shows that $\KGdim \modsCML=0$ if and only if $\LL$ is of finite CM representation type. 
By \cite[Proposition 3.3]{E19}, this equivalence also holds for $R$-orders. 
Hence every such ring is an isolated singularity (cf. \cite[2.2]{Y}). 
The first author also gives examples for which $\KGdim \modsCML=2$ \cite{H24}. 
These examples are hypersurface rings that are not isolated singularities. 
Thus Theorem \ref{B10} shows that these examples attain the smallest possible value of the Krull--Gabriel dimension among non-isolated singularities.
\end{remark}

\section{Krull--Gabriel dimension}\label{C}

This section is devoted to the proof of the following theorem, which is analogous to the corresponding result for finite dimensional algebras. 
We say that $\LL$ is of  {\it countable} CM representation type if there exist infinitely many but only countably many isomorphism classes of indecomposable MCM $\LL$-modules. 
We also say that $\LL$ is of  {\it uncountable} CM representation type if there exist uncountably many isomorphism classes of indecomposable MCM $\LL$-modules.

\begin{theorem}\label{C1}
Let $\LL$ be an isolated singularity. 
Suppose that $\LL$ is of uncountable CM representation type. 
Then $\KGdim \modsCML > 1$. 
\end{theorem}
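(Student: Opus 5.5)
We argue by contradiction. Assume $\KGdim \AA \le 1$, where $\AA = \modsCML$. The strategy is to show that this forces the set of isomorphism classes of indecomposable objects of $\CML$ to be countable.

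Since $\LL$ is an isolated singularity, every $M \in \CML$ lies in $\PP_0$ (and in $\II_0$). Consequently every $\sHomL(\ ,M)$ has finite length values, and every indecomposable nonprojective $N$ yields a simple functor $S_N$ via an AR sequence (Remark \ref{B3}). This gives an injection
\[
\{\text{indecomposable nonprojective } N\}/\!\cong \;\longrightarrow\; \{\text{simple objects of } \AA\}/\!\cong, \qquad N \mapsto S_N ,
\]
so it suffices to show that $\AA$ has only countably many isomorphism classes of simple objects.

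\textbf{Step 1: the representable functors.} For each $M \in \CML$, Remark \ref{B2} gives a presentation of $\sHomL(\ ,M)$ by a short exact sequence whose left end is $\Omega M$. By Proposition \ref{B5}, the number of simple subfunctors of a minimal $F$ is finite, being bounded by the number of indecomposable summands of $X$; dually for simple quotients.

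\textbf{Step 2: the key claim.} Every simple $S_N$ occurs as a composition factor, in $\AA$, of a subquotient built from a countable family of functors. To set this up, use $\KGdim \AA \le 1$: each $F \in \AA$ has finite length in $\AA/\AA_0$, with simple factors $\overline{S}_i$ satisfying $\m_R \overline{S}_i = 0$ by Lemma \ref{B9}. I would then study the socle series $\soc^n F$ and the radical series $\rad^n F$ in $\AA$, where finite length pieces come from $\AA_0$, and pass to $\colim_n \soc^n F$ and $\varprojlim_n F/\rad^n F$. Realizing $\AA/\AA_0$ inside $\ModsCC$ via $\widetilde{F}$ (Theorem \ref{A8}), the statement $F \notin \AA_0$ together with finite length in the quotient should force the following: for a fixed indecomposable-type generator $F = \sHomL(\ ,X)$, all but finitely many simples $S_N$ with $S_N(X) \neq 0$ arise as composition factors of $\soc^n F$ or of $F/\rad^n F$ for some $n$. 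Each $\soc^n F$ and each $F/\rad^n F$ has finite length, hence only finitely many composition factors. Therefore the simples supported at $X$, i.e.\ those $N$ with $\sHomL(X,N)\neq 0$, form a countable set.

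\textbf{Step 3: reduction to a countable generating family.} It remains to choose countably many $X$ whose supports cover all simples. Take $X = \LL$-syzygies or cosyzygies of a fixed module, or use that $\CML$ has a generator such as $\Omega^d k$ or a finite sum of such, so that every indecomposable $N$ satisfies $\sHomL(Y,N)\neq 0$ for $Y$ ranging over finitely many modules. If this finite-generation reduction fails, a fallback is to filter $\overline{\sHomL(\ ,X)}$ in $\AA/\AA_0$: its finitely many simple factors $\overline{S}_i$ are all annihilated by $\m_R$, so $\sHomL(\ ,X)$ becomes, modulo $\AA_0$, an iterated extension of $k$-linear pieces. Combining this with Proposition \ref{B8}(1), each $\End(\overline{S}_i)$ is a finite dimensional division $k$-algebra.

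\textbf{Main obstacle.} The hard part is Step 2: controlling which simples appear, that is, showing the supports of the simples in $\AA/\AA_0$ capture all but countably many AR-simples, without using generic modules. Concretely, I would try first to show that $\colim_n \soc^n F$ and $\varprojlim_n F/\rad^n F$ differ from $F$, respectively from its completion, by something detected in $\AA/\AA_0$. Finite length there would then bound the uncountable "middle" part, giving uncountable CM type a contradiction.
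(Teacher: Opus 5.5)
Your proposal has a genuine gap: Step 2 carries the entire proof, and it is only asserted (``should force''). As you formulate it, for $F=\sHomL(\ ,X)$ directly, you also have no handle on it, because the dichotomy that controls $\bigcap_n\rad^nF$ is available only when $\overline{F}$ is simple in $\AA/\AA_0$. Two ideas are missing.

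First, reduce to simple objects of the quotient. Take a composition series of $\overline{\sHomL(\ ,X)}$ in $\AA/\AA_0$ and lift each step to a short exact sequence $0\to H'_{i-1}\to H_i\to S'_i\to 0$ in $\AA$ (Proposition \ref{A2}(2)). Then use that functors isomorphic in $\AA/\AA_0$ have supports with finite symmetric difference, since the kernel and cokernel of a pseudo-isomorphism have finite length and hence finite support. So it suffices to show that $\Supp F$ is countable whenever $\overline{F}$ is simple.

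Second, for such $F$ the relevant object is the subfunctor $\Theta(F)=\bigcap_n\rad^nF$, not the completion $\varprojlim_n F/\rad^nF$. The key fact is $\bigcup_n\soc^n\Theta(F)=\Theta(F)$. Suppose some element of $\Theta(F)(M)$ is not in $\bigcup_n\soc^n\Theta(F)(M)$. The image $G\subseteq\Theta(F)\subseteq F$ of the corresponding morphism $\sHomL(\ ,M)\to F$ lies in $\AA$. Since $\overline{F}$ is simple (Corollary \ref{A4}), there are two cases. If $G\in\AA_0$, then $G\subseteq\soc^n\Theta(F)$ for some $n$, a contradiction. If $F/G\in\AA_0$, then $\rad^nF\subseteq G\subseteq\Theta(F)\subseteq\rad^{n+1}F$ for large $n$, contradicting strictness of the radical series for $F\notin\AA_0$ (Lemma \ref{C4}(3)). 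Granting this fact, every indecomposable $M$ with $F(M)\neq 0$ is detected in one of the countably many layers $\rad^nF/\rad^{n+1}F$ or $\soc^{n+1}\Theta(F)/\soc^n\Theta(F)$. Each layer has finite length and so finite support, hence $\Supp F$ is countable.

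Step 3 also needs repair. The support of the contravariant functor $\sHomL(\ ,X)$ consists of the $N$ with $\sHomL(N,X)\neq 0$, not $\sHomL(X,N)\neq 0$. The simples with $S_N(X)\neq 0$ are just those $N$ isomorphic to direct summands of $X$. What you need is a single $X$ with $\sHomL(N,X)\neq 0$ for every nonprojective indecomposable $N$; a right $\CML$-approximation of $\LL/\rad\LL$ does this (Lemma \ref{C12}). Your candidates $\Omega^d k$ or syzygies, tested against $\sHomL(Y,N)\neq 0$, have the wrong variance and are not justified. The fallback via Lemma \ref{B9} and Proposition \ref{B8}(1) concerns endomorphism rings of simple objects of $\AA/\AA_0$ and gives no information about the cardinality of supports. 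Finally, counting simple objects of $\AA$ is equivalent to the original goal here (simples correspond bijectively to nonprojective indecomposables), so that reformulation is not a reduction.
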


In what follows, we always assume that $\LL$ is an {\it isolated singularity}.   
Then every indecomposable MCM $\LL$-module admits an AR sequence. 

To show Theorem \ref{C1}, we need several preparations. 
First, we will introduce the socle series and the radical series for functors in a manner similar to that for modules. 

The main difficulty is to control the support of a simple object in $\AA/\AA_0$. 
Instead of passing to generic or pure-injective objects, we develop our argument from the structure of finitely presented functors. 
To this end, we introduce two constructions associated with a finitely presented functor, namely the direct limit of its socle series and the inverse limit of its radical series. 
These constructions enable us to control the support of simple objects in the quotient category. 

Let $G \in \ModsCML$. 
We denote by $\soc G$ the largest semisimple subfunctor, i.e. the sum of all the simple subfunctors, of $G$. 
For a nonnegative integer $i$, we define $\soc ^i G$ as $\soc ^0 G = 0$ and
$
\soc ^{i+1} G = p ^{-1}(\soc (G/ \soc ^i G)),
$
where $p : G \to G/\soc ^i G$ is the natural projection. 
Then we define the socle series of $G$ by the following increasing sequence of subfunctors: 
$$
0 = \soc ^0 G \subset \soc ^1 G = \soc G \subset \soc ^2 G \subset \cdots \subset G. 
$$

Let $F \in \ModsCML$. 
We denote by $\top F$ the maximal semisimple quotient of $F$.  
Thus there is a canonical epimorphism $\pi : F \to \top F$ such that every morphism $f : F \to S$, where $S$ is semisimple, factors uniquely through $\pi$. 
We define $\rad F = \Ker \pi$. 
Note that if $F$ belongs to $\modsCML$, then $\rad F$ also belongs to $\modsCML$. 
We define $\rad^{0} F = F$ and $\rad^{i} F = \rad (\rad^{i-1} F)$ for $i \geq 1$, and the radical series of $F$ by the following decreasing sequence of subfunctors: 
$$
F = \rad^0 F \supset \rad F \supset \rad ^2 F \supset \cdots .
$$

We note that the socle and the top exist for every $F \in \modsCML$. 
Let $F$ be represented by a minimal short exact sequence $0 \to X \to Y \to Z \to 0$. 
By Proposition \ref{B5}, the simple subfunctors of $F$ correspond to the indecomposable direct summands of $X$ belonging to
$\II_0$, whereas the simple quotient functors of $F$ correspond to the indecomposable direct summands of $Z$ belonging to $\PP_0$. 
Since $\CML$ is a Krull--Schmidt category, there are only finitely many such direct summands. 
Therefore, the sum of all simple subfunctors of $F$ is a semisimple functor of finite length, and the maximal semisimple quotient of $F$ is also a finite direct sum of simple functors. 
In particular, both belong to $\modsCML$.
Thus, $\soc F$ is the direct sum of the simple subfunctors corresponding to the indecomposable direct summands of $X$ that belong to $\II_0$, whereas $\top F$ is the direct sum of the simple quotient functors corresponding to the indecomposable direct summands of $Z$ that belong to $\PP_0$. 
If no such direct summand occurs, we set the corresponding semisimple functor equal to zero. 
Since $\modsCML$ is abelian and $\top F$ belongs to $\modsCML$, we have $\rad F \in \modsCML$. 

We denote $\AA = \modsCML$, and by $\AA_0$, we denote the full subcategory consisting of finite length functors, as in Section \ref{B}.  

\begin{lemma}\label{C4}
Let $F \in \AA$. The following statements hold.
\begin{enumerate}[\rm(1)]
\item If $F \neq 0$, then both $\soc F$ and $\top F$ are nonzero. 

\item If $F \in \AA \setminus \AA_0$, then $\rad F$ is nonzero. 

\item If $F\in \AA\setminus \AA_0$, then $\soc ^n F \subsetneq \soc ^{n+1} F$ and $\rad ^{n+1} F \subsetneq  \rad^{n} F$ for all $n\geq 0$.

\item Let $G$ be a subfunctor of $\rad F$. Then $\rad (F/G) = (\rad F)/G$. 
\end{enumerate}
\end{lemma}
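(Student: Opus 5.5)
We prove the four items in order, using only Lemma \ref{B4}, Proposition \ref{B5}, and the fact, valid because $\LL$ is an isolated singularity, that $\PP_0 = \II_0 = \CML$. Every indecomposable object therefore admits an AR sequence.

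For (1), let $F \neq 0$ be represented by a minimal sequence $0 \to X \to Y \to Z \to 0$. By Lemma \ref{A10} we may assume $X$ has no injective summands and $Z$ has no projective summands. If $X = 0$, then $Y \to Z$ is an isomorphism by minimality, and $F = 0$. Hence $X \neq 0$, and similarly $Z \neq 0$. Every indecomposable summand of $X$ is noninjective and lies in $\II_0$, and every indecomposable summand of $Z$ is nonprojective and lies in $\PP_0$. Proposition \ref{B5} then gives at least one simple subfunctor and at least one simple quotient, so $\soc F \neq 0$ and $\top F \neq 0$.

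For (2), suppose $\rad F = 0$. Then $F \cong \top F$, which is a finite direct sum of simples as explained before the lemma, so $F \in \AA_0$.

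For (3), take $F \in \AA\setminus\AA_0$. First, $\soc^n F \in \AA_0$ for every $n$: we argue by induction, since $\soc(F/\soc^n F)$ has finite length and $\AA_0$ is closed under extensions. Consequently $F/\soc^n F \in \AA\setminus\AA_0$; it is nonzero and finitely presented, because $\AA$ is abelian. By (1), $\soc(F/\soc^n F) \neq 0$, which gives $\soc^n F \subsetneq \soc^{n+1} F$.

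Dually, $F/\rad^n F$ has finite length. Indeed, $F/\rad^{n+1}F$ is an extension of $F/\rad^n F$ by $\rad^n F/\rad^{n+1} F \cong \top(\rad^n F)$, and each $\rad^n F$ lies in $\AA$ by the remark before the lemma. Thus $\rad^n F \notin \AA_0$, and (2) applied to $\rad^n F$ gives $\rad^{n+1} F \neq 0$. To get the strict inclusion $\rad^{n+1}F \subsetneq \rad^n F$, we note that $\top(\rad^n F) \neq 0$ by (1), since $\rad^n F \neq 0$.

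For (4), the quotient $(F/G)/(\rad F/G) \cong F/\rad F = \top F$ is semisimple, so $\rad(F/G) \subseteq \rad F/G$. Conversely, take any morphism $\bar f : F/G \to S$ with $S$ simple. Its composite $F \to F/G \to S$ factors through $\top F$, so it kills $\rad F$. Hence $\bar f$ kills $\rad F/G$. Since $\top(F/G)$ is a finite direct sum of simples (by the discussion above, as $F/G \in \AA$), the kernel of $F/G \to \top(F/G)$ is the intersection of the kernels of maps to simples. Therefore $\rad F/G \subseteq \rad(F/G)$, and equality holds.

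The main point needing care is (1): one must check that minimality together with the removal of injective and projective summands really forces $X$ and $Z$ to be nonzero. One must also confirm that noninjective indecomposables are in $\II_0$; this is where the isolated singularity hypothesis enters.
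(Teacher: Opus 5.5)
Your proposal is correct and follows essentially the same route as the paper, and it spells out steps the paper leaves implicit: the use of $\PP_0=\II_0=\CML$ (from the standing isolated-singularity hypothesis) in (1), and the finite length of $\soc^n F$ and of $F/\rad^n F$ in (3), which the paper's proof by contradiction tacitly needs. One small slip: in (4), $G$ is an arbitrary subfunctor of $\rad F$, so $F/G$ need not lie in $\AA$. This is harmless, because you can apply the universal property of $\top F$ directly to the composite $F \to F/G \to \top(F/G)$, as the paper does; alternatively, note that $F/G$ is finitely generated, so its top is still a finite direct sum of simples.
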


\begin{proof}
(1) It follows from Proposition \ref{B5}. 

(2) If $\rad F=0$, then $F = \top F$. 
Thus $F$ is semisimple. 
Since $F\in \AA$, $F$ is a finite direct sum of simple functors. 
Therefore $F\in \AA_0$. 
This contradicts $F\in \AA\setminus \AA_0$. 
Thus $\rad F\neq 0$.

(3) Suppose that the socle series or the radical series of $F \in \AA$ terminates. 
If the socle series terminates, then $F' = \cup_{n \ge 0} \soc^n F \in \AA_0$ and $F/F'$ has no socle. 
Since $F/F' \in \AA$, this contradicts (1). 
If the radical series terminates, then $F'' = \cap_{n \ge 0} \rad^n F \in \AA$ and $\top F'' =0$. 
This also contradicts (1). 

(4) Let $q: F\to F/G$ be the canonical epimorphism. 
Since $G\subseteq \rad F$, the canonical epimorphism $F\to \top F$ factors through $q$. 
Hence we have an epimorphism $F/G \to \top F$ whose kernel is $(\rad F)/G$.
Since $\top F$ is semisimple, this epimorphism factors through the maximal semisimple quotient $F/G\to \top (F/G)$. 
Therefore $\rad (F/G) = \Ker(F/G \to \top(F/G)) \subseteq \Ker(F/G \to \top F) =(\rad F)/G$. 

To show the reverse inclusion, let $p : F/G \to \top (F/G)$ be the canonical epimorphism. 
Recall that $\Ker p = \rad (F/G)$. 
Since $\top (F/G)$ is semisimple, $\rad F \subseteq \Ker (p \circ q)$. 
We obtain $\Ker (p \circ q) = q^{-1} (\Ker p ) = q^{-1} (\rad (F/G))$. 
Therefore $q(\rad F) = (\rad F)/ G \subseteq \rad (F/G)$. 
\end{proof}

We define the assignments associated with finitely presented functors, which will play key roles in the sequel. 

\begin{definition}\label{C5}
Let $F \in \ModsCML$. 
We define $\Gamma (F)$ as $\varinjlim _n \soc ^{n} F$ for the socle series of $F$. 
For the radical series of $F$, we define $\Theta (F)$ as $\varprojlim _n \rad ^{n} F$. 

Equivalently, we denote $\Gamma (F) = \bigcup _n \soc ^n F$ and $\Theta (F) = \bigcap_n \rad^n F$.
\end{definition}

We record the following facts. 

\begin{proposition}\label{C6}
Let $F \in \AA$. The following statements hold.
\begin{enumerate}[\rm(1)]
\item If $F \in \AA \setminus \AA_0$, then $\Theta (F) \cap \soc F \neq 0$, in particular $\Theta (F) \neq 0$. 
Moreover, $\Theta (F) \notin \AA$.  

\item $\Gamma(\Theta(F)) \subseteq \Theta(F)$ and $\Gamma(\Theta(F)) \subseteq \Gamma(F)$. 

\item If $F \in \AA \setminus \AA_0$, then  $\Gamma(\Theta(F)) \neq 0$. 
\end{enumerate}
\end{proposition}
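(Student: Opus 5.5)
The plan is to work throughout with the finite length semisimple functor $\soc F$ and with the fact that the quotients $F/\rad^n F$ have finite length. For (1), let $F\in\AA\setminus\AA_0$. First, $F/\rad^n F$ belongs to $\AA_0$ for every $n$. Indeed, it has a finite filtration whose factors are $\rad^{i}F/\rad^{i+1}F=\top(\rad^i F)$, and each of these is a finite direct sum of simple functors; here Lemma \ref{C4}(4) and the fact that $\rad^i F\in\AA$ are used. Hence $\rad^n F\in\AA\setminus\AA_0$, and Lemma \ref{C4}(1) gives $0\neq\soc(\rad^n F)\subseteq \rad^n F\cap\soc F$. The subobjects $\rad^n F\cap \soc F$ of the finite length object $\soc F$ form a decreasing chain of nonzero terms. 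This chain therefore stabilizes at some nonzero subfunctor, which equals $\Theta(F)\cap\soc F$. In particular $\Theta(F)\neq 0$.

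Next I show $\Theta(F)\notin\AA$, arguing by contradiction; I expect this to be the main step. Suppose $\Theta:=\Theta(F)\in\AA$. Then $F/\Theta\in\AA$. Since $\Theta\subseteq\rad^n F$ for all $n$, iterating Lemma \ref{C4}(4) gives $\rad^n(F/\Theta)=(\rad^n F)/\Theta$ for all $n$. The induction works because $\Theta\subseteq\rad(\rad^{n-1}F)$ at each stage. Consequently $\Theta(F/\Theta)=\bigcap_n(\rad^nF)/\Theta=0$. There are now two cases.
\begin{enumerate}[\rm(a)]
\item If $F/\Theta\notin\AA_0$, the first part of (1) applied to $F/\Theta$ gives $\Theta(F/\Theta)\neq 0$, a contradiction.
\item If $F/\Theta\in\AA_0$, then its radical series reaches $0$ after finitely many steps. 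So $\rad^n F=\Theta$ for some $n$, which forces $\rad^{n+1}F=\rad^nF$. This contradicts Lemma \ref{C4}(3).
\end{enumerate}

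For (2), the inclusion $\Gamma(\Theta(F))\subseteq\Theta(F)$ is immediate, because every $\soc^n$ of a functor is a subfunctor of it. For the second inclusion I prove by induction on $n$ that $\soc^n G\subseteq\soc^n F$ for every subfunctor $G\subseteq F$ in $\ModsCML$.
\begin{enumerate}[\rm(a)]
\item For $n=1$: every simple subfunctor of $G$ is a simple subfunctor of $F$, so $\soc G\subseteq\soc F$.
\item For the inductive step: the image of $\soc^{n+1}G$ in $F/\soc^nF$ is a quotient of $\soc^{n+1}G/(\soc^{n+1}G\cap\soc^nF)$. This is in turn a quotient of the semisimple functor $\soc^{n+1}G/\soc^nG$, using $\soc^nG\subseteq\soc^nF$. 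A quotient of a sum of simple functors is again a sum of simple functors, so the image lies in $\soc(F/\soc^nF)$. Hence $\soc^{n+1}G\subseteq\soc^{n+1}F$.
\end{enumerate}
Taking $G=\Theta(F)$ and passing to the union over $n$ gives $\Gamma(\Theta(F))\subseteq\Gamma(F)$.

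For (3), the subfunctor $\Theta(F)\cap\soc F$ of $\Theta(F)$ is semisimple, being a subobject of the finite semisimple functor $\soc F$. It is therefore contained in $\soc\Theta(F)=\soc^1\Theta(F)\subseteq\Gamma(\Theta(F))$. Since $\Theta(F)\cap\soc F$ is nonzero by (1), it follows that $\Gamma(\Theta(F))\neq0$.
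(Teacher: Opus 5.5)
Your proposal is correct and follows the paper's proof step by step: the stabilizing chain $\rad^n F\cap\soc F$ inside the finite-length $\soc F$ for the first part of (1), the contradiction via $\Theta(F/\Theta(F))=0$ together with Lemma \ref{C4}(3),(4) for $\Theta(F)\notin\AA$, and the inclusion $\Theta(F)\cap\soc F\subseteq\soc\Theta(F)$ for (3). Your case split on whether $F/\Theta(F)\in\AA_0$, and your induction showing $\soc^n G\subseteq\soc^n F$ in (2), only write out details the paper leaves implicit.
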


\begin{proof}
(1) Let $F \in \AA\setminus \AA_0$. 
Since $F \in \AA$, $\soc F$ is a finite direct sum of simple functors. 
Since $\rad^i F \in \AA$ for all $i$, it has a nonzero socle. 
Moreover, every simple subfunctor of $\rad^i F$ is a simple subfunctor of $F$. 
That is, 
$$
0 \neq \soc (\rad ^i F) \subseteq \rad ^i F \cap \soc F 
$$
for all $i$. 
This gives a decreasing chain of subobjects of $\soc F$; $\{ \rad ^i F \cap \soc F \} _{i \in \mathbb{N}}$. 
Since $\rad ^i F \cap \soc F$ is nonzero for all $i$ and is of finite length, $\rad ^i F \cap \soc F = \rad ^{i+1} F \cap \soc F$ for $i \gg 0$. 
Therefore $\Theta (F) \cap \soc F \neq 0$.

Assume that $\Theta(F)$ belongs to $\AA$. 
Then $F/\Theta(F)$ belongs to $\AA$. 
By the definition of $\Theta(F)$, $\Theta(F) \subseteq \rad^i F$ for all $i$. 
By Lemma \ref{C4}(4), $\rad ^i (F/ \Theta (F)) = (\rad^i F)/ \Theta (F)$. 
It follows that $\Theta(F/\Theta(F))=0$. 
By Lemma \ref{C4}(3) and (4), $\rad^{i+1}(F)/\Theta(F) \subsetneq \rad^{i}(F)/\Theta(F)$. 
Since $F \in \AA\setminus \AA_0$, Lemma \ref{C4}(3) also shows that the radical series of $F/\Theta(F)$ does not terminate. 
This shows that $F/\Theta(F)$ belongs to $\AA \setminus \AA_0$. 
Together with the first part of the proof, this implies $\Theta(F/\Theta(F))\neq 0$, which is a contradiction. 

(2) This follows from the definition. 

(3) Let $F\in \AA\setminus \AA_0$. 
By (1), we have $\Theta(F)\cap \soc F \neq 0$. 
Therefore $\Theta(F)\cap \soc F \subseteq \soc \Theta(F) \subseteq \Gamma(\Theta(F))\neq 0$. 
This shows (3).     
\end{proof}

\begin{theorem}\label{C7}
Let $F \in \AA \setminus \AA_0$. 
Assume that $\overline{F}$ is simple in $\AA /\AA_0$. 
Then $\Gamma (\Theta (F)) = \Theta (F)$.  
\end{theorem}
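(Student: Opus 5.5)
The strategy is to test $\Theta(F)$ one element at a time. I would show that each finitely generated subfunctor of $\Theta(F)$ lies in $\Gamma(\Theta(F))$. The inclusion $\Gamma(\Theta(F)) \subseteq \Theta(F)$ is Proposition \ref{C6}(2), so only the reverse inclusion needs proof.

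\emph{Reduction to a finitely presented subfunctor.} Fix $M \in \CML$ and $x \in \Theta(F)(M)$. By Yoneda, $x$ gives a morphism $\sHomL(\ ,M) \to F$. Let $G$ be its image. Since $\AA$ is abelian and both functors are finitely presented, $G \in \AA$. Moreover $G \subseteq \Theta(F)$, hence $G \subseteq \rad^n F$ for every $n$. Since $\overline{F}$ is simple in $\AA/\AA_0$, Corollary \ref{A4} applied to $0 \to G \to F \to F/G \to 0$ says that either $G \in \AA_0$ or $F/G \in \AA_0$. It then suffices to show $G \subseteq \Gamma(\Theta(F))$, because $x$ lies in $G(M)$.

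\emph{Case $G \in \AA_0$.} Here $G$ has finite length, so $\soc^m G = G$ for some $m$. By induction on $n$ I would check the standard fact that $\soc^n G \subseteq \soc^n H$ whenever $G \subseteq H$. The inductive step uses that $\soc^{n+1}G/\soc^n G$ is semisimple and maps into $H/\soc^n H$ with semisimple image, which lies in $\soc(H/\soc^n H)$. Taking $H = \Theta(F)$ gives $G \subseteq \soc^m \Theta(F) \subseteq \Gamma(\Theta(F))$.

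\emph{Case $F/G \in \AA_0$.} I would show this case cannot occur.
\begin{enumerate}[\rm(a)]
\item Every $\rad^i(F/G)$ lies in $\AA$. By Lemma \ref{C4}(1), as long as $\rad^i(F/G)\neq 0$ its top is nonzero, so the length of $\rad^i(F/G)$ strictly drops at each step. Hence $\rad^m(F/G)=0$ for some $m$.
\item Since $G \subseteq \rad^j F$ for all $j$, iterating Lemma \ref{C4}(4) gives $\rad^m(F/G) = (\rad^m F)/G$. Each step of the iteration applies the lemma to $\rad^{j}F$ with subfunctor $G \subseteq \rad(\rad^j F)$.
\item Combining (a) and (b), $\rad^m F = G$.
\item Then $G \subseteq \rad^{m+1}F \subsetneq \rad^m F = G$, where the strict inclusion is Lemma \ref{C4}(3) because $F \notin \AA_0$. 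This is a contradiction.
\end{enumerate}
Alternatively, (c) gives $\Theta(F) = G \in \AA$, contradicting Proposition \ref{C6}(1).

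The main point needing care is the bookkeeping in step (b), that is, applying Lemma \ref{C4}(4) inductively to the successive radicals. The rest is formal. This yields $\Theta(F) \subseteq \Gamma(\Theta(F))$ and hence the claimed equality.
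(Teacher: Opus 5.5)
Your proof is correct and follows essentially the same route as the paper. Both arguments take the subfunctor $G\subseteq\Theta(F)$ generated by an element, use Corollary \ref{A4} to split into the cases $G\in\AA_0$ (so $G\subseteq\Gamma(\Theta(F))$) and $F/G\in\AA_0$ (forcing $\rad^m F=G$, which contradicts Lemma \ref{C4}(3)). The only differences are that you argue directly instead of by contradiction, and that you spell out the monotonicity of the socle series and the iterated use of Lemma \ref{C4}(4), both of which the paper leaves implicit.
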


\begin{proof}
Assume, for a contradiction, that $\Gamma(\Theta(F)) \subsetneq \Theta(F)$. 
Then one has  $\Theta(F)/\Gamma(\Theta(F))\neq 0$. 
Hence there exists an indecomposable MCM $\LL$-module $X\in \sCML$ such that $\left(\Theta(F)/\Gamma(\Theta(F))\right)(X)\neq 0$. 
By Yoneda's lemma, there exists a nonzero morphism $f\in \Hom_{\ModsCML} \left( \sHomL(\ , X), \Theta(F)/\Gamma(\Theta(F)) \right)$. 
Since $\sHomL(\ , X)$ is projective, $f$ lifts to a morphism $\widetilde f:\sHomL(\ , X)\to \Theta(F)$. 
Thus we have the following commutative diagram:
$$
\xymatrix{
\sHomL(\ , X) \ar[r]^-{f\neq 0} \ar[dr]_-{\widetilde f} &\Theta(F)/\Gamma(\Theta(F)) \\
&\Theta(F) \subset F . \ar@{->>}[u]}
$$
Put $G=\Im \widetilde f$. 
Then $G\in \AA$. 
Indeed, $G$ is the image of the composite morphism $\sHomL(\ , X) \xrightarrow{\widetilde f} \Theta(F) \hookrightarrow F$. 
Since $\AA$ is an abelian category, $G$ belongs to $\AA$.  
Note that $G$ is a subfunctor of $F$. 
By Corollary \ref{A4}, we have either $G\in \AA_0$ or $F/G\in \AA_0$. 
If $G \in \AA_0$, then $G$ has finite length. 
Since $G \subseteq \Theta (F)$, we have $G\subseteq \soc^n \Theta (F)$ for some $n$, and hence $G \subseteq \Gamma (\Theta (F))$.
Hence the composite $\sHomL(\ , X) \xrightarrow{\widetilde f} \Theta(F) \twoheadrightarrow \Theta(F)/\Gamma(\Theta(F))$ is zero. 
This composite is precisely $f$, which contradicts $f\neq 0$.
Thus we must have $F/G\in \AA_0$. 
Then $\rad ^n F \subseteq G$ for $n \gg 0$. 
Since $G \subseteq \Theta (F)$, $\rad ^n F = \Theta (F)$. 
This contradicts Lemma \ref{C4}(3). 
\end{proof}

For a functor $F \in \AA$, we denote by $\Supp F$ the set of isomorphism classes of indecomposable MCM $\LL$-modules $M$ such that $F(M) \not =0$: 
$$
\Supp F = \{ \text{an indecomposable MCM $\LL$-module $M$} \mid F(M) \not=0 \} /\cong . 
$$ 

\begin{corollary}\label{C8}
Assume that $\overline{F}$ is nonzero and simple in $\AA /  \AA_0$. 
Then $\Supp F$ is countable.  
\end{corollary}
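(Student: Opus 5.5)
Since $\overline{F}$ is nonzero in $\AA/\AA_0$, we have $F \in \AA \setminus \AA_0$, so Theorem \ref{C7} gives $\Gamma(\Theta(F)) = \Theta(F)$. The plan is to first show that $\Supp \Gamma(G)$ is countable for any $G \subseteq F$ built from socle series. Then we control $\Supp F$ by $\Supp \Theta(F)$ together with the supports of the finite-length layers $\rad^n F/\rad^{n+1} F$.

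\textbf{Step 1.} Every simple functor $S_N$ in $\AA$ is the functor attached to an AR sequence $0\to L\to M\to N\to 0$, so $S_N(X)\neq 0$ only for $X\cong N$; thus $\Supp S_N = \{N\}$. A functor of finite length has as support the finite union of the supports of its composition factors, because evaluation at $X$ is exact. Hence every object of $\AA_0$ has finite support.

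\textbf{Step 2.} Each $\soc^n G$ of a subfunctor $G$ of $F$ has finite length. Indeed, $\soc F$ is finite length by Proposition \ref{B5}, and $\soc^{n+1}/\soc^n$ is the socle of a quotient of $F$, which is again in $\AA$ when $G=F$. For $G=\Theta(F)$ I would instead use $\soc^n\Theta(F) \subseteq \soc^n F$, which follows by induction: a semisimple subfunctor of $\Theta(F)/\soc^n\Theta(F)$ maps into $F/\soc^n F$, and its image is semisimple. So each $\soc^n \Theta(F)$ has finite length. Evaluation commutes with directed unions, so $\Gamma(\Theta(F))(X) = \bigcup_n (\soc^n\Theta(F))(X)$. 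Therefore $\Supp \Gamma(\Theta(F)) = \bigcup_n \Supp \soc^n \Theta(F)$ is a countable union of finite sets, hence countable. By Theorem \ref{C7}, $\Supp\Theta(F)$ is countable.

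\textbf{Step 3.} Next I pass from $\Theta(F)$ to $F$. For each $n$, the quotient $F/\rad^n F$ has finite length: $\rad^i F/\rad^{i+1} F = \top(\rad^i F)$ is a finite direct sum of simples, as shown before Lemma \ref{C4}. Hence $\Supp(F/\rad^n F)$ is finite. Now take $X$ with $F(X)\neq 0$. Either $X\in\Supp(F/\rad^nF)$ for some $n$, which gives a countable union of finite sets, or $F(X) = (\rad^n F)(X)$ for all $n$. In the latter case, $\Theta(F)(X)=\bigcap_n(\rad^nF)(X) = F(X) \neq 0$, since intersections of subfunctors are computed pointwise. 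So $X\in \Supp \Theta(F)$. Altogether $\Supp F \subseteq \Supp\Theta(F) \cup \bigcup_n \Supp(F/\rad^nF)$, which is countable.

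\textbf{Main obstacle.} The delicate point is Step 2. We must ensure that the socle layers of the possibly non-finitely-presented functor $\Theta(F)$ are of finite length, and the embedding $\soc^n\Theta(F)\subseteq \soc^n F$ is what I expect to carry this. Step 3's dichotomy is elementary once intersections are taken pointwise.
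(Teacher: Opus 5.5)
Your proof is correct and follows the same route as the paper. By Theorem \ref{C7}, $F$ is covered by two pieces: the radical series of $F$ down to $\Theta(F)=\Gamma(\Theta(F))$, and the socle series of $\Theta(F)$; each of the countably many layers has finite support. You also supply two details the paper's short proof leaves implicit: that the socle layers of the non-finitely-presented functor $\Theta(F)$ have finite length (via $\soc^n\Theta(F)\subseteq\soc^n F$), and the pointwise dichotomy between $F(X)\neq(\rad^nF)(X)$ for some $n$ and $X\in\Supp\Theta(F)$, which makes the ``countably many successive quotients'' argument rigorous for a chain that is not well ordered.
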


\begin{proof}
Let $F \in \AA$ be a functor such that $\overline{F}$ is nonzero and simple in $\AA /  \AA_0$. 
Consider the radical series of $F$ and the socle series of $\Theta (F)$:
\begin{align*}
0 \subset \soc ^1 \Theta (F) \subset \soc ^2 \Theta (F) &\subset \cdots  \subset \Gamma (\Theta (F)) \\
=\Theta (F) &\subset \cdots \subset \rad ^2 F \subset \rad ^1 F \subset F. 
\end{align*}
Recall that, by Theorem \ref{C7}, $\Gamma (\Theta (F))=\Theta (F)$.  
Each successive quotient is semisimple, hence is a finite direct sum of simple functors and has finite support. 
Since there are only countably many successive quotients, $\Supp F$ is a countable set. 
\end{proof}

\begin{lemma}\label{C9}
Suppose that $F$ is pseudo-isomorphic to $G$, that is, $\overline{F} \cong \overline{G}$ in $\AA / \AA_0$. 
Then the symmetric difference of $\Supp F$ and $\Supp G$ is a finite set. 
In particular, if $\Supp F$ or $\Supp G$ is an infinite set, then the cardinalities of $\Supp F$ and $\Supp G$ are the same.   
\end{lemma}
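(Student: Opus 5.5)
The plan is to reduce the statement to the case of a single pseudo-isomorphism and then to the fact that a finite length functor has finite support. Since pseudo-isomorphisms form a multiplicative system, $\overline{F} \cong \overline{G}$ in $\AA/\AA_0$ is represented by a roof $F \xleftarrow{a} H \xrightarrow{b} G$ in which $a$ and $b$ are both pseudo-isomorphisms. (An isomorphism $fa^{-1}$ in the quotient has $f$ a pseudo-isomorphism by Remark \ref{A3}.) The symmetric difference satisfies $\Supp F \,\triangle\, \Supp G \subseteq (\Supp F \,\triangle\, \Supp H) \cup (\Supp H \,\triangle\, \Supp G)$, so it suffices to prove the following: if $p: F_1 \to F_2$ is a pseudo-isomorphism in $\AA$, then $\Supp F_1 \,\triangle\, \Supp F_2$ is finite.

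For such a $p$, set $K = \Ker p$, $C = \Coker p$ and $I = \Im p$. We have exact sequences $0 \to K \to F_1 \to I \to 0$ and $0 \to I \to F_2 \to C \to 0$ in $\AA$, with $K, C \in \AA_0$. Evaluating at an indecomposable $M$ gives exact sequences of abelian groups. If $M \notin \Supp K \cup \Supp C$, then $F_1(M) \cong I(M) \cong F_2(M)$, so $M$ lies in both supports or in neither. Hence
$$
\Supp F_1 \,\triangle\, \Supp F_2 \subseteq \Supp K \cup \Supp C .
$$

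The key remaining step is that any $L \in \AA_0$ has finite support. Since $L$ has a finite composition series with simple factors, and supports are subadditive on short exact sequences ($\Supp L \subseteq \Supp L' \cup \Supp L''$), it suffices to treat a simple functor $S$. By Remark \ref{B3}, $S = S_N$ comes from an AR sequence $0 \to L \to M \to N \to 0$. The standard property of AR sequences gives $S_N(X) = 0$ for every indecomposable $X \not\cong N$, since every nonsplit map $X \to N$ factors through the right almost split map $M \to N$. So $\Supp S_N = \{N\}$, which is finite. Combining these facts shows that the symmetric difference is finite.

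For the final assertion, if $\Supp F$ is infinite, then $\Supp G$ differs from it by a finite set. Adding or removing finitely many elements does not change an infinite cardinality, so $|\Supp F| = |\Supp G|$, and the same holds with the roles of $F$ and $G$ swapped. The only point requiring care is the representation of the quotient-category isomorphism by a roof of pseudo-isomorphisms. One could alternatively avoid it by using the colimit description of $\HomAS$, but the roof argument via Remark \ref{A3} is the route I would take.
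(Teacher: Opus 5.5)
Your proof is correct and follows the paper's argument: the exact sequence $0\to K\to F\to G\to C\to 0$ with $K,C\in\AA_0$ places the symmetric difference inside $\Supp K\cup\Supp C$, which is finite. You go slightly beyond the paper in two places: you reduce an arbitrary isomorphism $\overline{F}\cong\overline{G}$ to a roof of two pseudo-isomorphisms via Remark \ref{A3}, where the paper simply assumes a direct pseudo-isomorphism $F\to G$; and you justify finiteness of supports through $\Supp S_N=\{N\}$ for simple functors, which the paper only asserts.
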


\begin{proof}
Suppose that there is a pseudo-isomorphism $f : F \to G$. 
We have an exact sequence $0 \to K \to F \to G \to C \to 0$ in $\AA$ such that $K$ and $C$ are in $\AA_0$. 
Here $K = \Ker f$ and $C=\Coker f$. 
We also have the exact sequences: 
$$
0 \to K \to F \to \Im f \to 0, \quad 0 \to \Im f \to G \to C \to 0. 
$$
It is easy to see that the symmetric difference of $\Supp F$ and $\Supp G$ is contained in $\Supp K \cup \Supp C$. 
Notice that $\Supp K$ and $\Supp C$ are finite sets. 
Hence the symmetric difference is also a finite set. 

If $\Supp F$ or $\Supp G$ is infinite, then they differ by only finitely many indecomposable MCM modules. 
Therefore, the cardinalities of $\Supp F$ and $\Supp G$ are the same.
\end{proof}

\begin{lemma}\cite[Lemma 2.1]{H17}\cite[Proposition 4.7]{E19}\label{C12}
Let $\LL$ be an $R$-order. 
Then there exists $X \in \CML$ such that $\sHomL (M, X) \not= 0$ for each $M \in \CML$ with $\underline{M} \not= \underline{0}$ in $\sCML$. 
\end{lemma}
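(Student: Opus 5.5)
The approach is to choose a single ``test'' module built from the simple $\LL$-modules and to detect non-projectivity of $M$ by a dimension-shifting argument. Let $d=\dim R$ and $T=\LL/\rad\LL$, the direct sum of the simple $\LL$-modules. Fix a minimal projective resolution of $T$ and take
$$
X \;=\; \Omega^{d}_{\LL}T \;\oplus\; \Omega^{d+1}_{\LL}T ,
$$
which lies in $\CML$ because $d$-th syzygies over an $R$-order are MCM by the depth lemma. The second summand is a hedge in case the shift works out one step off. The goal is to show $\sHomL(M,X)\neq 0$ whenever $M\in\CML$ has no projective direct summand. Since $\sHomL(\ ,X)$ is additive, we may assume $M$ is indecomposable and non-projective.

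The first step is to convert stable Hom into an Ext group. For any short exact sequence $0\to \Omega Y\to P\to Y\to 0$ with $P\to Y$ a projective cover, every map $M\to Y$ factoring through a projective factors through $P$. Hence
$$
\sHomL(M,Y)\;\cong\;\Ker\bigl(\Ext^1_\LL(M,\Omega Y)\to \Ext^1_\LL(M,P)\bigr).
$$
We apply this with $Y=\Omega^{d}T$ and $Y=\Omega^{d+1}T$, and then iterate the long exact sequences along the minimal resolution of $T$. The aim is to relate these groups to $\Ext^{i}_\LL(M,T)$ for suitable $i$, using that $\Ext^j_\LL(M,\LL)$ is controlled for MCM $M$ in degrees $j\ge 1$ only through the Ext-injectivity of $\omega_\LL$. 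This Ext-injectivity holds because $\Ext^j_\LL(M,\omega_\LL)\cong\Ext^j_R(M,\omega_R)=0$ for $j>0$. The second step is to observe that $\Ext^i_\LL(M,T)\neq 0$ for all $i\ge 0$ when $M$ is non-projective. Indeed, its $k$-dimension counts the indecomposable summands of the $i$-th term of the minimal projective resolution of $M$, which is nonzero in every degree because an MCM module of finite projective dimension over an order is projective by Auslander--Buchsbaum. In particular $\Ext^1_\LL(M,T)\neq0$, and the task is to push this nonvanishing up to $\sHomL(M,\Omega^dT)$.

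The main obstacle is this transfer in the non-Gorenstein case. The comparison maps $\Ext^1_\LL(M,\Omega^{j}T)\to\Ext^{2}_\LL(M,\Omega^{j-1}T)$ need not be injective, because the middle terms $\Ext^1_\LL(M,P_j)$ with $P_j$ projective need not vanish. My first attempt would be to use minimality: a nonzero element of $\Ext^{d+1}_\LL(M,T)$ coming from the radical layer lifts to a morphism $\Omega^{d+1}M\to \Omega^{d}T$ or $\Omega^{d+1}T$ that does not factor through a projective, since it is nonzero modulo $\rad$ on tops. One would then shift back from $\Omega^{d+1}M$ to $M$ using that $M\cong (\text{some }d\text{-th syzygy})$ up to $\omega_\LL$-summands, obtained via the duality $(\ )^\dagger=\HomR(\ ,\omega_R)$ applied to a projective resolution of $M^\dagger$. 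If this bookkeeping fails, the fallback is a finiteness argument. For each non-projective indecomposable $M$, the module $Y_M=\Omega^{-1}$-type envelope of $\Omega M$ satisfies $\sHomL(M,Y_M)\neq0$ (take the identity-class via $0\to\Omega M\to P\to M\to0$). One would then show that a single $X$ suffices by noting that the functors $\sHomL(M,\ )$ are all detected on the finitely many indecomposable summands of the syzygies of $T$, using that $\top$ of any nonzero stable morphism space is detected by simples.
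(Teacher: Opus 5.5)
\textbf{There is a genuine gap, and the choice of $X$ is wrong for the statement as given.} The decisive step is never proved: you need $\sHomL(M,\Omega^dT\oplus\Omega^{d+1}T)\neq 0$ for every non-projective indecomposable $M\in\CML$. Your ``first attempt'' and ``fallback'' are only sketches. The fallback simply asserts what has to be shown, namely that one fixed module detects every $M$.

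For an arbitrary $R$-order this assertion is false. Take $R=k[[t]]$, $A$ the path algebra of $1\to 2$, and $\LL=A[[t]]$. Then $\LL$-modules are representations $V_1\xrightarrow{\phi}V_2$ of $R$-modules, with $P_1=(R\xrightarrow{1}R)$, $P_2=(0\to R)$ and $T=S_1\oplus S_2$. One computes $\Omega S_1\cong(R\xrightarrow{t}R)$, $\Omega S_2\cong P_2$ and $\Omega(R\xrightarrow{t}R)\cong P_2$. So your $X$ is stably $(R\xrightarrow{t}R)$. But $M=(R\to 0)$ is an indecomposable non-projective MCM module with $\HomL(M,X)=0$, because a morphism $(h_1,h_2)\colon M\to X$ would need $t h_1=0$.

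The same $M$ has projective dimension $1$. So your appeal to Auslander--Buchsbaum (``an MCM module of finite projective dimension over an order is projective'') is also false for general orders; it already fails for $d=0$ and any non-semisimple hereditary algebra. Here $\Ext^2_\LL(M,T)=0$, so there is no class in $\Ext^{d+1}_\LL(M,T)$ to lift.

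\textbf{The missing idea} is the one the paper uses. Let $\pi\colon X\to T$ be a right $\CML$-approximation of $T=\LL/\rad\LL$; it exists because $\CML$ is contravariantly finite in $\modL$. For $M$ indecomposable and non-projective, pick $0\neq g\colon M\to T$ (possible since $M/\rad M\neq 0$) and write $g=\pi h$ with $h\colon M\to X$. Suppose $h=ba$ with $a\colon M\to P$ and $P$ projective. Then $a(M)\subseteq\rad P$: otherwise Nakayama gives an indecomposable summand $Q$ of $P$ with $M\to Q$ surjective, and then $Q$ splits off $M$. Hence $g(M)=\pi b a(M)\subseteq\rad T=0$, a contradiction, so $\sHomL(M,X)\neq 0$. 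In the example above, this $X$ is $(R\to 0)\oplus(R\xrightarrow{t}R)$, which contains exactly the module your choice misses.

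For the record, under the isolated-singularity hypothesis in force where the lemma is applied, the summand $\Omega^dT$ alone does work. The route there is Auslander--Reiten duality, $\sHomL(M,\Omega^dT)\cong D\Ext^{d+1}_\LL(T,\tau M)$ with $D$ the Matlis dual. One combines this with the fact that, for $Z\in\CML$, $\Ext^{d+1}_\LL(T,Z)=0$ if and only if $Z\in\add\omega_\LL$ (reduce modulo a regular system of parameters). This is not the Ext-shifting argument you outline, and it is not available for general orders.
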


\begin{proof}
Recall that $\CML$ is a contravariantly finite subcategory of $\mod \LL$. 
Take a right $\CML$-approximation of $\LL / \rad \LL$ as $X$.  
\end{proof}

\begin{proof}[Proof of Theorem \ref{C1}] 
By Remark \ref{B11}, $\KGdim \modsCML = 0$ if and only if $\LL$ is of finite CM representation type. 
Hence, it is sufficient to consider the case $\KGdim \modsCML = 1$. 
We take $X$ in Lemma \ref{C12} and consider $ \sHomL(\ ,  X)$. 
Then $\overline{ \sHomL (\ , X) }$ has finite length in $\AA /\AA_0$. 
We have a composition series in $\AA /\AA_0$:
$$
0 = \overline{H_0} \subset \overline{H_1} \subset \overline{H_2} \subset \cdots  \subset \overline{H_n} = \overline{ \sHomL (\ , X) }. 
$$
For each $i$, we have a short exact sequence $0 \to \overline{H_{i-1}} \to \overline{H_{i}} \to \overline{S_i} \to 0$ where $\overline{S_i}$ is simple. 
There exists a short exact sequence $0 \to H'_{i-1} \to H_{i} \to S'_i \to 0$ in $\AA$ such that $\overline{H_{i-1}} \cong \overline{H'_{i-1}}$ and $\overline{S_{i}} \cong \overline{S'_{i}}$ in $\AA / \AA_0$ by Proposition \ref{A2}(2). 
By Lemma \ref{C9}, $\Supp H_i$ and $\Supp H'_i$ differ only by a finite set. 
Hence, by induction on $i$, the support of $ \sHomL(\ , X)$ is obtained from the supports of the $S'_i$ by adding or removing only finitely many indecomposable MCM modules.
By Corollary \ref{C8}, each $\Supp S' _i$ is countable. 
Therefore $\Supp  \sHomL (\ , X)$ is countable. 
By the choice of $X$, we conclude that the set of isomorphism classes of nonprojective indecomposable MCM $\LL$-modules is countable, so that $\LL$ is of countable CM representation type. 
This contradicts the assumption that $\LL$ is of uncountable CM representation type. 
Hence $\KGdim \modsCML \neq 1$.
\end{proof}

By Theorem \ref{B10}, if $\LL$ is Gorenstein, the assumption $\KGdim \modsCML = 1$ implies that $\LL$ is an isolated singularity. 
Therefore we also have the following.

\begin{corollary}\label{C10}
Let $\LL$ be a Gorenstein $R$-order. 
Suppose that $\LL$ is of uncountable CM representation type. Then $\KGdim \modsCML \neq 1$. 
\end{corollary}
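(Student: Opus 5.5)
This follows by combining Theorem \ref{B10} with Theorem \ref{C1}, arguing by contradiction. Suppose that $\LL$ is a Gorenstein $R$-order of uncountable CM representation type and that $\KGdim \modsCML = 1$.

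First, since $\KGdim \modsCML \le 1$ and $\LL$ is Gorenstein, Theorem \ref{B10} applies. It shows that $\LL$ is an isolated singularity. This is the only place where the Gorenstein hypothesis is used. It is needed because the proof of Theorem \ref{B10} relies on $\PP_0 = \II_0$ and on the isomorphism $\sHomL(M,M)\cong \Ext^1_\LL(M,\Omega M)$.

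Second, $\LL$ is now an isolated singularity of uncountable CM representation type. Theorem \ref{C1} then gives $\KGdim \modsCML > 1$, contradicting $\KGdim \modsCML = 1$. Hence $\KGdim \modsCML \neq 1$.

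There is no substantive obstacle, since the argument only chains two earlier results. The one point to check is that the hypotheses line up. Theorem \ref{B10} needs $\KGdim \le 1$, which holds under the assumption $\KGdim = 1$. Theorem \ref{C1} needs only the isolated singularity property and uncountable type, with no Gorenstein assumption, so applying it is legitimate once Theorem \ref{B10} has produced the isolated singularity conclusion.

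We also record why the statement is phrased as $\neq 1$ rather than $>1$. In the non-isolated case, $\KGdim$ could a priori be $0$. However, that case is excluded too: by Remark \ref{B11}, $\KGdim \modsCML = 0$ forces finite CM representation type, which is incompatible with uncountable type. So in fact $\KGdim \modsCML > 1$, but $\neq 1$ is what is claimed.
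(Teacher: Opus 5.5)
Your proof is correct and matches the paper's own argument. Assuming $\KGdim \modsCML = 1$, Theorem~\ref{B10} makes $\LL$ an isolated singularity, and then Theorem~\ref{C1} gives $\KGdim \modsCML > 1$, a contradiction; your closing remark also holds, since Remark~\ref{B11} excludes $\KGdim \modsCML = 0$ in uncountable type, so in fact $\KGdim \modsCML > 1$.
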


Let $k$ be an uncountable algebraically closed field and $A$ a finite dimensional $k$-algebra. 
It is well known that every finitely generated indecomposable $A$-module admits an AR sequence.
Moreover, by the second Brauer--Thrall theorem, $A$ is either of finite representation type or of uncountable representation type. 
Therefore, as a direct consequence of Theorem \ref{C1}, we obtain the following stable analogue of the results of Herzog \cite{He97} and Krause \cite{Kr98}.

\begin{corollary}\label{C11}
Let $A$ be a finite dimensional $k$-algebra over an uncountable algebraically closed field $k$. 
Then $\KGdim \mod \underline{\mod A} \neq 1$. 
\end{corollary}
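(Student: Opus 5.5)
The plan is to reduce Corollary \ref{C11} to Theorem \ref{C1} by viewing a finite dimensional algebra as an order over a zero-dimensional base. Take $R = k$. It is a field, hence a complete Cohen--Macaulay local ring of dimension $0$ with $\m_R = 0$ and canonical module $\omega_R = k$. Then $A$ is a $k$-order, since $A$ is finitely generated over $k$ and every finitely generated $k$-module is maximal Cohen--Macaulay (depth and dimension are both $0$). For the same reason every finitely generated $A$-module is MCM over $R$, so $\CC(A) = \mod A$ and $\underline{\CC}(A) = \underline{\mod A}$. The statement of Corollary \ref{C11} therefore becomes $\KGdim \mod \underline{\CC}(A) \ne 1$.

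Next I verify the hypotheses of Theorem \ref{C1}. The isolated singularity condition concerns only non-maximal primes of $\Spec R$, and $\Spec k = \{0\} = \{\m_R\}$, so the condition holds vacuously. This matches the remark after the statement of Theorem \ref{C1}: every indecomposable non-projective $A$-module has an AR sequence, which is the classical Auslander--Reiten theory for artin algebras. One can also check it directly through the equivalent characterization that $\sHom_A(M,N)$ has finite length over $k$, which holds because it is finite dimensional.

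Then I split by representation type. By the second Brauer--Thrall theorem over the uncountable algebraically closed field $k$, $A$ is either of finite or of uncountable representation type. Projective indecomposables are finite in number, so uncountably many indecomposable $A$-modules means uncountable CM representation type in the sense of Section \ref{C}. In that case Theorem \ref{C1} gives $\KGdim \mod \underline{\mod A} > 1$. If $A$ has finite representation type, then $A$ has finite CM representation type. By Remark \ref{B11} (the order version from \cite[Proposition 3.3]{E19}), $\KGdim \mod \underline{\mod A} = 0$. In either case the dimension is not $1$.

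The only step I expect to need care is the invocation of the second Brauer--Thrall theorem. It needs the uncountable algebraically closed base so that infinitely many indecomposables forces uncountably many, through infinitely many dimensions each carrying a one-parameter family. I will cite the theorem rather than reprove it. Everything else is a formal specialization of the paper's standing assumptions to $\dim R = 0$.
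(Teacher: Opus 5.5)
Your proposal is correct and follows the paper's argument. You take $R=k$, so that $A$ is an order with $\CC(A)=\mod A$ which is vacuously an isolated singularity. You use the second Brauer--Thrall theorem for the finite/uncountable dichotomy, then apply Theorem \ref{C1} in the uncountable case and Remark \ref{B11} (giving $\KGdim = 0$) in the finite case; the paper leaves the reduction to $\dim R=0$ and the finite-type case implicit, whereas you spell them out.
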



\ifx\undefined\bysame 
\newcommand{\bysame}{\leavevmode\hbox to3em{\hrulefill}\,} 
\fi

\end{document}